\newcommand{\leqnomode}{\tagsleft@true\let\veqno\@@leqno}
\newcommand{\reqnomode}{\tagsleft@false\let\veqno\@@eqno}
\newcommand{\mylabel}[2]{\def\@currentlabel{#2}\label{#1}}
\colorlet{darkred}{red!80!black}
\newcommand{\bbP}{{\mathbb P}}
\newcommand{\bbE}{{\mathbb E}}
\newcommand{\bbS}{{\mathbb S}}
\newcommand{\calC}{{\mathcal C}}
\newcommand{\Om}{\Omega}
\newcommand{\tht}{\varphi}
\newcommand{\beq}{\begin{equation}} 
\newcommand{\eeq}{\end{equation}}
\newcommand{\lb}{\label}
\newcommand{\calB}{{\mathcal B}}
\newcommand{\diff}{D}
\newcommand{\diffe}{\diff_e}
\begin{document}
\typeout{textwidth: \the\textwidth}
\title[Convection-Induced Singularity Suppression]{Convection-Induced Singularity Suppression in the Keller-Segel and Other Non-linear PDEs}

\author[Iyer]{Gautam Iyer}
\address{%
  Department of Mathematical Sciences,
  Carnegie Mellon University,
  Pittsburgh, PA 15213, USA}
\email{gautam@math.cmu.edu}

\author[Xu]{Xiaoqian Xu}
\address{%
  Department of Mathematics,
  Duke Kunshan University,
  Kunshan, China
}
\email{xiaoqian.xu@dukekunshan.edu.cn}

\author[Zlato\v s]{Andrej Zlato\v s}
\address{%
  Department of Mathematics,
  UC San Diego,
  La Jolla, CA 92130, USA}
\email{zlatos@ucsd.edu}
\thanks{
  This work has been partially supported by
  the National Science Foundation under grants
  DMS-1652284 and DMS-1900943 to AZ, and
  DMS-1814147 to GI, 
  as well as by the Center for Nonlinear Analysis.
}
\subjclass[2010]{Primary
  35B44; 
  Secondary
  35B27, 
  35Q35, 
  76R05. 
}

\begin{abstract}
  In this paper we study the effect of the addition of a convective term, and of the resulting increased dissipation rate, on the growth of solutions to a general class of non-linear parabolic PDEs.
  In particular, we show that blow-up in these models can always be prevented if the added drift has a small enough dissipation time.
  We also prove a general result relating the dissipation time and the effective diffusivity of  stationary cellular flows, which allows us to obtain examples of simple  incompressible flows with arbitrarily small dissipation times.

As an application, we show that blow-up in the Keller-Segel model of chemotaxis can always be prevented if the velocity field of the ambient fluid has a sufficiently small dissipation time.
We also study reaction-diffusion equations with ignition-type nonlinearities, and show that the reaction can always be quenched by the addition of a convective term with a small enough dissipation time, provided the average initial temperature is initially below the ignition threshold.
\end{abstract}

\maketitle

\section{Introduction and main results}\label{s:intro}

The question of growth and blow-up of solutions to  non-linear parabolic PDEs is of widespread interest, and arises in their applications to areas such as fluid dynamics, population dynamics, combustion, and cosmology.
Convection is often included in the relevant models, and its presence may have two diametrically opposite effects on the solution dynamics.
While, on the one hand, it does have the ability to promote formation of singular structures, in this paper we concentrate on the opposite, stabilizing effect of convection.

The ability of strong (incompressible) drifts to slow down growth of solutions to non-linear PDEs, and even prevent their blow-up, has been studied by many authors (see for instance~\cite{FannjiangKiselevEA06,BerestyckiKiselevEA10,KiselevXu16,BedrossianHe17,He18,HeTadmor19}).
Many of the results focus on the study of special convective motions with certain mixing properties, and their ability to prevent singularity formation in  specific models.
While such flows are good candidates for this purpose, there are various other (often much simpler) flows that can be used to control singularities.

Indeed, our first main result shows that addition of flows that simply enhance dissipation to a certain degree is sufficient to keep solutions regular.
This, of course, includes many strong mixing flows, whose dissipation-enhancing properties have recently been studied (see, e.g., \cites{ConstantinKiselevEA08,Zlatos10,CotiZelatiDelgadinoEA18,FengIyer19}).  But our results apply to general flows, and the simplicity of the relevant hypotheses allows us to apply them to a wide  class of non-linear parabolic PDEs.

\subsection*{Suppression of blow-up in general non-linear PDEs}

In order to keep the presentation simple, we restrict our attention to the spatially periodic setting.
We hence study the PDE
\begin{equation}\label{e:theta}
  \partial_t \theta + u \cdot \grad \theta = \lap \theta + N(\theta)\,
\end{equation}
on the $d$-dimensional torus $\T^d$ and with initial data $\theta_0\in L^2_0(\T^d)$, where $L^p_0(\T^d)$ is the space of all mean-zero functions in $L^p(\T^d)$.
The velocity $u$ in the convection term is a prescribed (i.e.\ independent of~$\theta$) time-dependent Lipschitz divergence-free vector field.
The
non-linear operator $N\colon H^1(\T^d) \to L^2_0(\T^d)$  is measurable, and its target space being $L^2_0(\T^d)$ means that solutions to~\eqref{e:theta} remain mean-zero.  Finally, $N$ satisfies the following crucial hypotheses:

\begin{enumerate}[({H}1)]
  \item\mylabel{A2}{(H1)}
    There exists $\epsilon_0 \in (0, 1]$ and 
    an increasing continuous function $F \colon [0, \infty) \to [0, \infty)$ such that for every $\varphi \in  H^1(\T^d)$ we have
    \begin{equation*}
      \abs[\Big]{ \int_{\T^d} \varphi N(\varphi) \, dx }
	\leq (1 - \epsilon_0) \norm{\grad \varphi}_{L^2}^2 + F(\norm{\varphi}_{L^2}).
    \end{equation*}
  \item\mylabel{A3}{(H2)}
    There exists $C_0<\infty$ and 
    an increasing continuous function $G \colon [0, \infty) \to [0, \infty)$ such that for every $\varphi \in  H^1(\T^d)$ we have
    \begin{equation*}
      \norm{N(\varphi)}_{L^2} \leq C_0 \norm{\grad \varphi}_{L^2}^2 + G( \norm{\varphi}_{L^2} )\,.
    \end{equation*}
\end{enumerate}


Under hypotheses~\ref{A2}--\ref{A3}, our main result shows that for any mean-zero  initial data $\theta_0$, the corresponding solution to~\eqref{e:theta} is uniformly bounded in $L^2(\T^d)$ on its time interval of existence, provided the \emph{dissipation time} of $u$ is sufficiently small.
The latter is a measure of the dissipation rate enhancement provided by the convection term, defined as follows (see also~\cites{FannjiangWoowski03,Zlatos10,FengIyer19}).

\begin{definition}
\label{d:dissipationTime}
  Let 
  $u \in L^\infty( (0, \infty) \times \T^d)$
  be a divergence-free vector field, and let $\mathcal S_{s, t}$ be the solution operator to the advection-diffusion equation
  \begin{equation}\label{e:ad}
    \partial_t \varphi + u \cdot \grad \varphi = \lap \varphi \,
  \end{equation}
on $(0, \infty) \times \T^d$.
  That is, $\varphi_t \defeq \mathcal S_{s, t} f$ solves~\eqref{e:ad} with initial data $\varphi_s = f$.
 The \emph{dissipation time} of~$u$ is
  \begin{equation*}
    \tau_*(u) \defeq  \inf \set[\Big]{ t \geq 0 \st  \norm{ \mathcal S_{s, s+t} }_{L^2_0 \to L^2_0 } \leq \frac{1}{2} \text{ for all $s\geq 0$} }\,.
  \end{equation*}
\end{definition}

Here, and throughout this paper, $\varphi_t \defeq \varphi(t,\cdot )$ denotes the slice of the function $\varphi$ at time $t$.
Note also that since the $L^2$-norm of solutions to~\eqref{e:ad} is non-decreasing,  $\norm{ \mathcal S_{s, s+t} }_{L^2_0 \to L^2_0 } \leq \frac{1}{2}$ implies $\norm{ \mathcal S_{s, s+t'} }_{L^2_0 \to L^2_0 } \leq \frac{1}{2}$ for all $t'\geq t$.

We can now state our first main result.  
Recall that $\theta$ satisfying~\eqref{e:thetaSpace} is a {\it mild solution} to~\eqref{e:theta} with initial data $\theta_0$ if
  \begin{equation*}
    \theta_t = \mathcal S_{0, t} \theta_0
	+ \int_0^{t} \mathcal S_{s, t}  N(\theta_s) \, ds\,.
  \end{equation*}
We note that mild solutions are also weak (see Section \ref{s:bounds} below).

\begin{theorem}\label{t:ThetaGrowth}
Assume that $N$ satisfies hypotheses~\ref{A2}--\ref{A3} and
  \begin{equation}\label{e:thetaSpace}
    \theta\in L^2_{\mathrm{loc}}((0,T), H^1(\T^d))\cap C([0,T),L^2_0(\T^d))\,
  \end{equation}
  is a  mild solution to~\eqref{e:theta} with
  $u \in L^\infty( (0, \infty) \times \T^d )$
  a divergence-free vector field.
  There is $\tau_0 = \tau_0(\norm{\theta_0}_{L^2}, N)$ such that if $\tau_*(u) \leq \tau_0$, then
  \begin{equation*}
 \sup_{t\in [0,T)} \norm{\theta_t}_{L^2} \leq 2 \norm{\theta_0}_{L^2} + 1 \,.
	  \end{equation*}
  If the functions~$F$ and~$G$ from hypotheses~\ref{A2} and \ref{A3} also satisfy
  \begin{equation}\label{e:FGvanish}
    \limsup_{y\to 0^+} \frac{\sqrt{F(y)}+G(y)}y <\infty\,
  \end{equation}
  and $T = \infty$, then $\norm{\theta_t}_{L^2} \to 0$ exponentially as $t \to \infty$.
\end{theorem}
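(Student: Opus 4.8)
The plan is to run a continuity argument on $t\mapsto\norm{\theta_t}_{L^2}$, playing two estimates off one another over time windows of length $\tau\defeq\tau_0$: the energy estimate from~\ref{A2}, and the mild formulation of~\eqref{e:theta} together with the enhanced dissipation of $\mathcal S$. Set $M_0\defeq\norm{\theta_0}_{L^2}$, $M\defeq 2M_0+1$, and let $T^*\defeq\sup\{t\in[0,T):\norm{\theta_s}_{L^2}\le M\text{ for all }s\in[0,t]\}$; since $\theta\in C([0,T),L^2_0)$ and $M_0<M$ we have $T^*>0$, and $T^*<T$ would force $\norm{\theta_{T^*}}_{L^2}=M$. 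I will show that if $\tau_0$ is small enough, depending only on $M_0$ and $N$, then $\sup_{t\in[0,T^*)}\norm{\theta_t}_{L^2}<M$, which rules out $T^*<T$. Note that $\tau_*(u)\le\tau_0=\tau$ and Definition~\ref{d:dissipationTime} (with the monotonicity remark following it) give $\norm{\mathcal S_{s,s+\tau}}_{L^2_0\to L^2_0}\le\tfrac12$ for all $s\ge0$, while always $\norm{\mathcal S_{s,t}}_{L^2_0\to L^2_0}\le1$.

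Fix a window $I_k\defeq[k\tau,(k+1)\tau]\subseteq[0,T^*)$ and write $a_k\defeq\norm{\theta_{k\tau}}_{L^2}$, $M_k\defeq\sup_{s\in I_k}\norm{\theta_s}_{L^2}$. The evolution property of $\mathcal S$ rewrites the mild formula as $\theta_{(k+1)\tau}=\mathcal S_{k\tau,(k+1)\tau}\theta_{k\tau}+\int_{I_k}\mathcal S_{s,(k+1)\tau}N(\theta_s)\,ds$, so the operator bounds above yield $a_{k+1}\le\tfrac12 a_k+\int_{I_k}\norm{N(\theta_s)}_{L^2}\,ds$. Meanwhile the energy identity for~\eqref{e:theta} — available for mild solutions in the class~\eqref{e:thetaSpace} by Section~\ref{s:bounds} — combined with~\ref{A2} gives, for $0\le r\le r'<T^*$, $\norm{\theta_{r'}}_{L^2}^2-\norm{\theta_r}_{L^2}^2\le-2\epsilon_0\int_r^{r'}\norm{\grad\theta_s}_{L^2}^2\,ds+2\int_r^{r'}F(\norm{\theta_s}_{L^2})\,ds$. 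Choosing $[r,r']=I_k$ gives $\epsilon_0\int_{I_k}\norm{\grad\theta_s}_{L^2}^2\,ds\le\tfrac12(a_k^2-a_{k+1}^2)+\tau F(M_k)$, and choosing $[r,r']=[k\tau,t]$ for $t\in I_k$ (dropping the good term and using $\norm{\theta_s}_{L^2}\le M$) gives $M_k\le a_k+\delta$ with $\delta\defeq\sqrt{2\tau F(M)}$, so $M_k\le M'\defeq M+\delta$. Combining these with~\ref{A3}, $\int_{I_k}\norm{N(\theta_s)}_{L^2}\,ds\le C_0\int_{I_k}\norm{\grad\theta_s}_{L^2}^2\,ds+\tau G(M_k)\le\tfrac{C_0}{2\epsilon_0}(a_k^2-a_{k+1}^2)+b$ with $b\defeq\tau\bigl(\tfrac{C_0}{\epsilon_0}F(M')+G(M')\bigr)$. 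Substituting into the mild inequality and moving the $a_{k+1}^2$ term to the left, with $g(x)\defeq x+\tfrac{C_0}{2\epsilon_0}x^2$ we obtain the recursion $g(a_{k+1})\le g(a_k)-\tfrac12 a_k+b$.

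Since $g$ is strictly increasing with $g(x)\ge x$, this recursion forces $a_{k+1}\le a_k$ whenever $a_k\ge 2b$ and $a_{k+1}\le g^{-1}(g(2b)+b)\le g(2b)+b=3b+\tfrac{2C_0}{\epsilon_0}b^2$ whenever $a_k<2b$; hence $\sup_k a_k\le\max\{M_0,\,3b+\tfrac{2C_0}{\epsilon_0}b^2\}$. As $\tau_0\to0$ we have $\delta,b\to0$, so for $\tau_0$ small this maximum is at most $M_0+\tfrac12$, and together with $M_k\le a_k+\delta$ (which also controls $\norm{\theta_t}_{L^2}$ on the possibly incomplete last window of $[0,T^*)$) we get $\sup_{t\in[0,T^*)}\norm{\theta_t}_{L^2}\le M_0+\tfrac12+\delta<M$. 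This closes the bootstrap and yields $\sup_{t\in[0,T)}\norm{\theta_t}_{L^2}\le 2M_0+1$. I expect the real obstacle to be exactly this combination of the two estimates: the term $C_0\norm{\grad\varphi}_{L^2}^2$ in~\ref{A3} is supercritical for the $L^2$ energy, and a direct Duhamel bound — estimating $\int_{I_k}\norm{\grad\theta_s}_{L^2}^2\,ds$ just by $\tfrac1{\epsilon_0}(\tfrac12 a_k^2+\tau F(M))$ — only gives $a_{k+1}\le\tfrac12 a_k+\tfrac{C_0}{2\epsilon_0}a_k^2+(\text{small})$, which contracts solely for small data; the fix is to bound that integral by the \emph{decrease} $a_k^2-a_{k+1}^2$ of the energy over $I_k$, turning the supercritical term into the harmless telescoping structure above.

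For the exponential decay, note that~\eqref{e:FGvanish} provides $C<\infty$ and a threshold $y_0>0$, with $y_0$ as small as we please and $C$ staying bounded, such that $F(y)\le Cy^2$ and $G(y)\le Cy$ for $0\le y\le y_0$. By the first part $\liminf_k a_k\le 3b+\tfrac{2C_0}{\epsilon_0}b^2<y_0$ once $\tau_0$ is small, so $a_{k_0}<y_0$ for some $k_0$. Restarting the bootstrap from $t_0\defeq k_0\tau$ under the assumption $\norm{\theta_t}_{L^2}<y_0$, the same two estimates now read $\epsilon_0\int_I\norm{\grad\theta_s}_{L^2}^2\,ds\le\tfrac12(\tilde a_j^2-\tilde a_{j+1}^2)+\tau C\tilde M_j^2$ and $\int_I\norm{N(\theta_s)}_{L^2}\,ds\le C_0\int_I\norm{\grad\theta_s}_{L^2}^2\,ds+\tau C\tilde M_j$ on $I=[t_0+j\tau,t_0+(j+1)\tau]$, with $\tilde M_j\le\tilde a_j(1+C'\tau)$ for $\tau$ small; combining them as before and using $\tilde a_j\le y_0$ gives $\tilde a_{j+1}\le\bigl(\tfrac12+\tfrac{C_0 y_0}{2\epsilon_0}+C''\tau\bigr)\tilde a_j$. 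Choosing $y_0$ small so that $\tfrac{C_0 y_0}{2\epsilon_0}<\tfrac18$ and then $\tau_0$ small so that $C''\tau_0<\tfrac18$, the factor is at most $\tfrac34<1$; this closes the restarted bootstrap, so $\tilde a_j\le(\tfrac34)^j\tilde a_0$ for all $j$, and hence $\norm{\theta_t}_{L^2}\le\tilde M_{\lfloor(t-t_0)/\tau\rfloor}\le(1+C'\tau)(\tfrac34)^{\lfloor(t-t_0)/\tau\rfloor}\tilde a_0$ decays exponentially as $t\to\infty$.
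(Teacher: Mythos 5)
Your proof is correct and follows essentially the same strategy as the paper's (Propositions~\ref{p:L2contraction} and~\ref{p:L2decay}): iterate over time windows of length $\tau_*(u)$, apply the Duhamel formula with the factor $\tfrac12$ coming from the dissipation time, and use the energy identity from~\ref{A2} to convert the supercritical term $C_0\norm{\grad\theta}_{L^2}^2$ in~\ref{A3} into a telescoping energy decrement. The only structural difference is that the paper argues by a dichotomy on each window (either $\int\norm{\grad\theta_t}_{L^2}^2\,dt$ dominates $\epsilon_0^{-1}\int F(\norm{\theta_t}_{L^2})\,dt$, so the energy identity alone gives non-increase, or it does not, so Duhamel contracts), which yields the cleaner monotonicity statement \eqref{e:L2decay1}, whereas you fold both cases into a single recursion for the Lyapunov quantity $a_k+\tfrac{C_0}{2\epsilon_0}a_k^2$ and run a continuity argument instead of an induction; this is an equally valid bookkeeping of the same two estimates. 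Your exponential-decay step (restricting to $\norm{\theta_t}_{L^2}<y_0$ where \eqref{e:FGvanish} gives $F(y)\le Cy^2$ and $G(y)\le Cy$, then rerunning the window argument to obtain a contraction factor below $1$) likewise parallels the paper's Proposition~\ref{p:L2decay}.
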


\begin{remark*}
While~\eqref{e:FGvanish} guarantees $N(0)=0$,  \ref{A2}--\ref{A3} alone do not.  One therefore cannot expect $\lim_{t\to\infty} \theta_t = 0$ in general without assuming~\eqref{e:FGvanish}.
\end{remark*}

We note that the time $\tau_0$ can be computed explicitly in terms of~$F$, $G$, $C_0$, $\epsilon_0$ and $\norm{\theta_0}_{L^2}$.
It is given by $T_0(\norm{\theta_0}_{L^2})$ from Proposition~\ref{p:L2contraction} below for the first claim, and by $T_1(\norm{\theta_0}_{L^2})$ from Proposition~\ref{p:L2decay} below for the second claim.
A lower bound on the exponential decay rate when \eqref{e:FGvanish} holds can also be obtained from Proposition~\ref{p:L2decay}, and is discussed in Remark~\ref{r:decayRate} below.

Finally, this result and its proof easily extend to the PDEs obtained from~\eqref{e:theta} and~\eqref{e:ad} by replacing $\Delta$ by $-(-\Delta)^\gamma$ for any $\gamma>0$, provided one also replaces $\dot H^{\pm 1}(\T^d)$ by $\dot H^{\pm \gamma}(\T^d)$, and $\norm{\grad \varphi}_{L^2}$ in~\ref{A2} and~\ref{A3} by~$\norm{(-\lap)^{\gamma / 2} \varphi}_{L^2}$.

\subsection*{Cellular flows with small dissipation times.}

In order to apply Theorem~\ref{t:ThetaGrowth}, we need to construct flows with arbitrarily small dissipation times.
One construction of such flows is through appropriate rescaling of mixing flows.
Indeed, the action of mixing flows transfers energy from lower to higher frequencies;
the faster this happens, the faster solutions to~\eqref{e:ad} dissipate energy and the smaller~$\tau_*(u)$ becomes.
This principle has previously been used to obtain rigorous bounds on the dissipation time.
In~\cites{ConstantinKiselevEA08} (see also~\cite{BerestyckiHamelEA05,KiselevShterenbergEA08})  the authors show that for time-independent velocity fields~$u=u(x)$ we have $\lim_{A\to\infty} \tau_*(A u) = 0$ if and only if the operator $u \cdot \grad$ on $\T^d$ has no eigenfunctions in $H^1(\T^d)$ other than constants.
In particular, if the flow of~$u$ is weakly mixing (i.e., the spectrum of $u \cdot \grad$ is continuous), then $\tau_*(Au)$ must vanish as the flow amplitude   $A \to \infty$.
Moreover, in~\cites{FengIyer19,CotiZelatiDelgadinoEA18} the authors obtain explicit bounds on $\tau_*(u)$ from the (appropriately defined) \emph{mixing rate} of $u$.
As a result, if $u$ generates an exponentially mixing flow, then $\tau_*( A u(At,\cdot)) \leq \frac cA (\ln A)^2$
for some constant $c > 0$.

The disadvantage of constructing flows with small dissipation times in this manner is that known examples of strongly mixing flows are either quite complicated or not very regular (see for instance~\cite{ConstantinKiselevEA08,YaoZlatos17,ElgindiZlatos19,AlbertiCrippaEA19}).
There are, however, many flows that are far from mixing in any sense but still have small dissipation times.
While these times cannot vanish as the amplitude of the flow is increased, such flows are still sufficient for our purposes.

Here we construct flows with arbitrarily small dissipation times by rescaling
a general class of smooth (time-independent) cellular flows.
A prototypical example of a 2D cellular flow is given by
\begin{equation}\label{e:sinflow}
  u(x)
    = \nabla^\perp \sin(2\pi x_1)\sin(2\pi x_2)
    = 2\pi \begin{pmatrix} -\sin( 2\pi x_1) \cos( 2\pi x_2)\\
      \cos(2\pi x_1) \sin( 2\pi x_2 )
    \end{pmatrix}\,.
\end{equation}
In two dimensions, all cellular flows have closed trajectories and are therefore not mixing.

Nevertheless, we will show that by rescaling both the cell size and the flow amplitude, the dissipation time of such flows can be made arbitrarily small.
%
We will achieve this by establishing a relation between their dissipation time and their (direction-dependent) \emph{effective diffusivity}.
The latter, denoted  $D_e(u)$ for any $e\in\mathbb S^{d-1}$, is the asymptotic long-time mean square displacement in direction~$e$ of the stochastic process on $\R^d$ associated to the operator $\lap-u\cdot\nabla$, 
normalized by the factor $\frac 1{2t}$
(a precise definition can be found at beginning of Section~\ref{s:dtimeBounds} below).
If we let $D(u) \defeq \min\set{D_{e_1}(u), \dots, D_{e_n}(u)}$ be the minimum of the effective diffusivities of the flow $u$ in each of the coordinate directions, then we have the following result.



\begin{theorem}\label{C.7.6}
  For each $n\in\mathbb N$, let $u_n\in W^{1, \infty}(\T^d)$ be a mean-zero divergence-free vector field that is symmetric  in all coordinates (i.e., it satisfies~\eqref{7.6} below).
  If $\lim_{n\to\infty} D(u_n) = \infty$, then there exists $\nu_n\in\mathbb N$ such that the rescaled velocity fields $v_n(x) \defeq -\nu_n u_n( \nu_n x )$ on $\T^d$ satisfy
  \begin{equation*}
    \lim_{n \to \infty} \tau_*(v_n)  = 0\,.
  \end{equation*}
  In particular, for $d \in \set{2, 3}$ and any $T > 0$, there exists a  smooth cellular flow $u$ on $\T^d$ such that $\tau_*(u) \leq T$.
\end{theorem}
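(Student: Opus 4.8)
The plan is to reduce everything, by a \emph{parabolic rescaling}, to a single estimate relating the dissipation time of $u$ on a large torus to its effective diffusivity, and then to establish that estimate by comparison with the homogenized semigroup.

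First, the rescaling. For $\nu\in\mathbb N$ and a divergence-free $w$ on $\T^d$, set $\Psi(s,y)\defeq\varphi(\nu^{-2}s,\nu^{-1}y)$ for $y\in\nu\T^d\defeq\R^d/(\nu\mathbb Z^d)$; then $\Psi$ turns a solution $\varphi$ of the advection--diffusion equation for $-\nu w(\nu\,\cdot)$ on $\T^d$ into a solution of the advection--diffusion equation for $-w$ on $\nu\T^d$, with time dilated by $\nu^2$, and $\varphi(0,\cdot)\mapsto\Psi(0,\cdot)$ is, up to the fixed scalar $\nu^{d/2}$, an $L^2$-isometry onto the (propagator-invariant) subspace of $\mathbb Z^d$-periodic functions in $L^2_0(\nu\T^d)$. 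Since the dissipation time of a divergence-free time-independent flow is unchanged when the flow is negated (the adjoint of $e^{t(\lap-w\cdot\grad)}$ is $e^{t(\lap+w\cdot\grad)}$, the propagator for $-w$, with the same operator norm), this gives $\tau_*(v_n)\le\nu_n^{-2}\,\tau_*^{\nu_n\T^d}(u_n)$, where $\tau_*^{\Omega}$ denotes the dissipation time of a flow on the torus $\Omega$ (Definition~\ref{d:dissipationTime} with $L^2_0(\T^d)$ replaced by $L^2_0(\Omega)$, so $\tau_*^{\T^d}=\tau_*$). Thus it suffices to prove: there is a universal constant $c_0$ so that for every $u$ as in the theorem there is $M_0(u)\in\mathbb N$ with
\[
  \tau_*^{M\T^d}(u)\ \le\ c_0\Bigl(\frac{M^2}{D(u)}+1\Bigr)\qquad\text{for all integers }M\ge M_0(u).
\]
Choosing $\nu_n\ge M_0(u_n)$ with $\nu_n\to\infty$ then gives $\tau_*(v_n)\le c_0\bigl(D(u_n)^{-1}+\nu_n^{-2}\bigr)\to0$, which is the first assertion.

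To prove the displayed relation I would compare $\mathcal S^u_{0,t}$ on $M\T^d$ with the homogenized semigroup $e^{t\bar{\mathcal L}}$, $\bar{\mathcal L}\defeq\sum_{i,j}\bar D_{ij}\partial_i\partial_j$, where $\bar D$ is the effective diffusivity matrix of $u$ (so $\bar D_{jj}=D_{e_j}(u)$, and by the coordinate symmetry~\eqref{7.6} $\bar D$ is diagonal, whence $\min_{0\ne k\in\mathbb Z^d}k\cdot\bar D k=\min_j\bar D_{jj}=D(u)$). On $L^2_0(M\T^d)$ one has $\norm{e^{t\bar{\mathcal L}}}=\exp(-4\pi^2t\,D(u)/M^2)$, which equals $\tfrac14$ at $t=M^2\ln4/(4\pi^2D(u))$; quantitative periodic homogenization gives $\norm{\mathcal S^u_{0,t}-e^{t\bar{\mathcal L}}}_{L^2_0\to L^2_0}\le\tfrac14$ at that time once $M\ge M_0(u)$ (then the diffusion length $\sqrt{\bar D\,t}$ is comparable to the torus size $M\gg1$, so the relative homogenization error is $\sim M^{-1}$, with the required size of $M$ absorbed into $M_0(u)$), and adding the two bounds yields $\norm{\mathcal S^u_{0,t}}_{L^2_0(M\T^d)}\le\tfrac12$ — this gives the relation when $D(u)\le M^2$, and the complementary regime $D(u)>M^2$ (where the target time is $O(1)$) is handled the same way and accounts for the ``$+1$''. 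An equivalent, more hands-on route is the Bloch--Floquet decomposition $L^2_0(M\T^d)=\bigoplus_{\xi\in\frac{2\pi}M(\mathbb Z/M\mathbb Z)^d}e^{i\xi\cdot x}L^2(\T^d)$, reducing matters to the fibre operators $-L_\xi=H_\xi+iK_\xi$ with $H_\xi\defeq\abs{-i\grad+\xi}^2\ge\operatorname{dist}(\xi,2\pi\mathbb Z^d)^2$ and $K_\xi\defeq u\cdot(-i\grad+\xi)$ self-adjoint (using $\grad\cdot u=0$): the $\xi=0$ fibre decays at rate $\ge4\pi^2$ by $\tfrac d{dt}\norm{g}_{L^2}^2=-2\norm{\grad g}_{L^2}^2$ and Poincar\'e; the small-$\xi$ fibres decay at rate $\sim\xi\cdot\bar D\xi\ge4\pi^2D(u)/M^2$ via the effective-diffusivity expansion of the eigenvalue bifurcating from $0$ together with a Gearhart--Pr\"uss argument (the numerical range of $-L_\xi$ lies in $\{\operatorname{Re}z\ge\operatorname{dist}(\xi,2\pi\mathbb Z^d)^2\}$, so its resolvent is controlled to the left of the spectrum); and the remaining fibres decay at rate $\ge\operatorname{dist}(\xi,2\pi\mathbb Z^d)^2\ge4\pi^2D(u)/M^2$ once $M\ge M_0(u)$.

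The crux — and the step I expect to be hardest — is exactly this comparison: one must run the homogenization estimate in the $L^2_0\to L^2_0$ \emph{operator} norm, on the torus, at the borderline time $t\sim M^2/D(u)$ (not in the usual weaker norms or the short-time regime), with explicit control of the dependence on $u$, since in the application $u=u_n$ varies and $\norm{u_n}_{L^\infty}$ may grow far faster than $D(u_n)$. What rescues the argument is that $\nu_n$ is ours to choose as large as we like, so every $u$-dependent error is of lower order than the leading rate $D(u)/M^2$ and can be hidden inside $M_0(u)$; in the Bloch--Floquet picture this is the fact that, for fixed $u$, as $\abs\xi\to0$ the spectral projection onto the bifurcating eigenvalue converges to the orthogonal projection onto constants, so the non-normality of $L_\xi$ on the relevant subspace becomes negligible.

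Finally, for the ``in particular'' statement it suffices, by the first part, to exhibit for $d\in\{2,3\}$ smooth mean-zero divergence-free coordinate-symmetric vector fields $u_n$ on $\T^d$ with $D(u_n)\to\infty$: then $\tau_*(v_n)\to0$, so for $n$ large $v_n=-\nu_n u_n(\nu_n\,\cdot)$ — a smooth cellular flow — satisfies $\tau_*(v_n)\le T$. For $d=2$ take $u_n\defeq A_n\nabla^\perp\bigl(\sin(2\pi x_1)\sin(2\pi x_2)\bigr)$ with $A_n\to\infty$, as in~\eqref{e:sinflow}: it is coordinate-symmetric (so its effective diffusivity matrix is diagonal), and the classical boundary-layer lower bound for the effective diffusivity of cellular flows (which grows like $A_n^{1/2}$) gives $D(u_n)\to\infty$. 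For $d=3$ one uses in the same way the analogous three-dimensional coordinate-symmetric cellular flows whose effective diffusivity is enhanced in every coordinate direction (see Section~\ref{s:dtimeBounds}).
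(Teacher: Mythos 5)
Your reduction by parabolic rescaling is sound and is in fact how the paper proceeds as well (its Lemma~\ref{L.7.2} and Theorem~\ref{T.7.3} rescale an $l$-periodic flow to a $1$-periodic one with the same effective diffusivity, and the examples you give for $d\in\{2,3\}$ are exactly Examples~\ref{E.7.7} and~\ref{E.7.8}). The problem is the step you yourself flag as the crux: the bound $\norm{\mathcal S^u_{0,t}-e^{t\bar{\mathcal L}}}_{L^2_0(M\T^d)\to L^2_0(M\T^d)}\leq\frac14$ at $t\sim M^2/D(u)$ is asserted, not proved, and it is not an off-the-shelf consequence of quantitative periodic homogenization. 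Worse, it is essentially equivalent to the theorem itself: since $\norm{e^{t\bar{\mathcal L}}}_{L^2_0\to L^2_0}=\frac14$ at that time, your claimed estimate directly says that \emph{every} unit-norm mean-zero datum --- including data oscillating at the scale of the period of $u$, where the homogenized semigroup carries no information about $\mathcal S^u$ --- has its $L^2$ norm halved by time $t$. That is precisely the statement $\tau_*^{M\T^d}(u)\lesssim M^2/D(u)$ one is trying to prove, so invoking homogenization here begs the question. The Bloch--Floquet variant has the same gap in a different guise: for the fibres with $0<\operatorname{dist}(\xi,2\pi\mathbb Z^d)\lesssim\sqrt{D(u)}/M$ (of which there are many, and \emph{all} of them when $D(u)\gtrsim M^2$) you need a semigroup bound $\norm{e^{-tL_\xi}}\lesssim e^{-c\,\xi\cdot\bar D\xi\,t}$ for the non-normal operator $L_\xi$ at a time $t$ that is only bounded below by a universal constant, so any $u$-dependent prefactor coming from non-orthogonal spectral projections or from Gearhart--Pr\"uss resolvent constants cannot be absorbed by waiting longer; establishing this uniformly over the relevant $\xi$ is serious unproved work, not a remark. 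A further warning sign is that your intermediate estimate $\tau_*^{M\T^d}(u)\leq c_0(M^2/D(u)+1)$ is much stronger than what the paper obtains (Remark~\ref{r:tauDSize} yields only a small power of $D(u_n)$ in the rate), and it barely uses the symmetry hypothesis~\eqref{7.6}, which the paper needs essentially.

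For contrast, the paper's route deliberately avoids two-sided homogenization. It uses only a \emph{one-sided, probabilistic} consequence of large effective diffusivity (Theorem~2.1 of~\cite{Zlatos11}, quoted as Lemma~\ref{L.7.1}): at some time $t\leq\tau+1$ the diffusion process $X^x_t$ has moved distance $\gtrsim\alpha\sqrt{\tau D_e(u)}$ in direction $e$ with probability $\geq1-C\alpha$. This alone says nothing about where the process lands, so the coordinate symmetry~\eqref{7.6} is then used in a reflection argument (Lemma~\ref{L.7.4}): once the process has crossed one of the mirror hyperplanes, the strong Markov property and the reflection invariance of $u$ and of Brownian motion force its distribution to be (almost) equidistributed among the cells $\calC^\mu_k$. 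By duality this means $\int_{\calC^\mu_k}\tht_\tau\,dx$ is nearly independent of $k$, and a cell-wise Poincar\'e inequality then gives $\norm{\nabla\tht_\tau}_{L^2}\gtrsim\mu\norm{\tht_\tau}_{L^2}$ up to a small error, hence enhanced $L^2$ decay (Theorem~\ref{T.7.5}). If you want to salvage your outline, you would need to either prove the operator-norm homogenization estimate from scratch (including the high-frequency and intermediate-frequency regimes, uniformly in the non-self-adjoint fibres), or replace it by a one-sided mechanism of this kind; as written, the central inequality is a restatement of the goal.
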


We will in fact provide an upper bound on the minimal required  $\nu_n$ (with cell size of $v_n$ being $\nu_n^{-1}$ times the cell size of $u_n$), and further show that with our choice of $\nu_n$ one has
\begin{equation*}
  \tau_*(v_n) \leq C \,D(u_n)^{-\alpha} \ln( 1 + D(u_n) )\,,
\end{equation*}
for some constant~$C$ and some explicit $\alpha > 0$.
The precise details are in Remark~\ref{r:tauDSize} in Section~\ref{s:dtimeBounds} below.

We note that  effective diffusivity of cellular flows has been extensively studied by many authors (see for instance~\cites{ChildressSoward89,FannjiangPapanicolaou94,Koralov04,RyzhikZlatos07}), particularly in two dimensions. 
In fact, typical 2D cellular flows (including the one in~\eqref{e:sinflow}) satisfy $D(Au) \sim A^{1/2}$ as $A\to\infty$ (see Example~\ref{E.7.7} below).
While the asymptotic behavior of~$D(Au)$  is not as well understood in three dimensions, a large class of 3D cellular flows  still has~$\lim_{A\to\infty} D(Au) = \infty$ (see, e.g., Example~\ref{E.7.8} below).
Thus, in both two and three dimensions, we can apply Theorem~\ref{C.7.6} by choosing $u_n(x) = n u(n x)$ for some cellular flow~$u$.


We end this introduction by discussing applications of the above ideas and results to specific non-linear models.

\subsection*{Suppression of blow-up in the Keller-Segel system.}

Chemotaxis is the movement of organisms in response to chemical stimuli, and arises in many contexts such as the movement of bacteria towards food sources and sperm towards eggs, as well as migration of neurons and leukocytes.
The mathematical study of chemotaxis was initiated by Patlak~\cite{Patlak53}, and Keller and Segel~\cites{KellerSegel70,KellerSegel71} who modelled the process as a coupled parabolic system.
Here, we study a simplified, parabolic-elliptic version of this system introduced by J\"ager and Luckhaus~\cite{JagerLuckhaus92} (see also~\cites{Horstmann03,Horstmann04,Perthame07}).
If~$\rho\geq 0$ represents the bacterial population density and~$c\geq 0$ represents the concentration of a chemoattractant  produced by the bacteria, then the evolution of~$\rho$ and~$c$ is governed by
\begin{subequations}
\begin{gather}
  \label{e:ks1}
  \partial_t \rho - \lap \rho = -\dv \paren[\big]{ \rho \chi \grad c }\,,
  \\
  \label{e:ks2}
  - \lap c = \rho - \bar \rho\,,
  \\
  \label{e:ks3}
  \bar \rho = \int_{\T^d} \rho \, dx\,,
\end{gather}
\end{subequations}
which we consider on  $\T^d$.
Here $\chi > 0$ is a sensitivity parameter and we will assume that the dimension $d$ is either $2$ or $3$.
This model stipulates that bacterial diffusion is biased in the direction of the gradient of the concentration of a chemoattractant that is emitted by the bacteria themselves, and that the chemoattractant diffuses much faster than the bacteria do.

It is proved in~\cite{JagerLuckhaus92} that if $d = 2$ and the initial data is below a certain critical threshold, solutions to~\eqref{e:ks1}--\eqref{e:ks3} are regular for all positive time.
Above this threshold, \cite{JagerLuckhaus92} constructs solutions that blow up in finite time by concentrating positive mass at a single point.
In three dimensions, similar results were proved by Herrero et\ al.~\cite{HerreroMedinaEA97,HerreroMedinaEA98,Herrero00} (see also~\cites{Horstmann03,Perthame07}).

We now consider the Keller-Segel system in the presence of a drift, generated by the movement of the ambient fluid.
If $u$ is a divergence-free  vector field representing the fluid velocity, then~\eqref{e:ks1} is replaced by
\begin{gather}\tag{\ref{e:ks1}$'$}\label{e:ks1Drift}
  \partial_t \rho + u \cdot \grad \rho = \lap \rho  -\dv \paren[\big]{ \rho \chi \grad c }\,.
\end{gather}
This model was studied previously by Kiselev and Xu in~\cite{KiselevXu16}, where they show that for any initial population distribution, there exists an ambient velocity field~$u$ that ensures the solution to~\eqref{e:ks1Drift} remains regular for all positive time.
They prove this showing that two families of velocity fields~$u$, the relaxation-enhancing flows from \cite{ConstantinKiselevEA08} and the (initial-data-dependent and time-dependent) exponentially mixing flows from \cite{YaoZlatos17},  can be used to prevent a blow-up in solutions to~\eqref{e:ks1Drift}.

Both these types of flows have fairly complicated geometries.
In contrast, our results above allow us to prove a  result similar to that in~\cite{KiselevXu16}, using general flows that have sufficiently small dissipation times.
This of course includes the flows considered in~\cite{KiselevXu16}, but also the much simpler  fast cellular flows with small cells.
 

\begin{theorem}\label{t:KS}
  Let $d \in \set{2, 3}$ and $\rho_0 \in C^{\infty}(\mathbb{T}^d)$ be any non-negative function.
  There exists $\tau_0 = \tau_0(\norm{\rho_0}_{L^2},\chi) > 0$, such that if 
  for some divergence-free Lipschitz $u$ we have $\tau_*(u)\leq \tau_0$, then the unique solution $\rho$ to~\eqref{e:ks1Drift}, \eqref{e:ks2}, \eqref{e:ks3} with initial data $\rho_0$ is globally regular and 
  \begin{equation}\label{4.2}
  \lim_{t\to\infty} \norm{\rho_t - \bar \rho}_{L^2}=0\,.
  \end{equation}
  In particular, for each such $\rho_0$, there is a time-independent smooth cellular flow $u$ on $\T^d$ that prevents singularity formation  in~\eqref{e:ks1Drift}, \eqref{e:ks2}, \eqref{e:ks3}.
\end{theorem}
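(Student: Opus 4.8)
The plan is to reduce the system \eqref{e:ks1Drift}, \eqref{e:ks2}, \eqref{e:ks3} to the abstract equation~\eqref{e:theta} and then apply Theorem~\ref{t:ThetaGrowth}. Integrating \eqref{e:ks1Drift} over $\T^d$ shows that $\bar\rho \defeq \int_{\T^d}\rho_0\,dx$ is conserved, so I set $\theta \defeq \rho - \bar\rho \in L^2_0(\T^d)$ and $c \defeq (-\lap)^{-1}\theta$ (so that \eqref{e:ks2} holds and only $\grad c$ matters). Then $\theta$ solves~\eqref{e:theta} with the mean-zero operator
\[
  N(\theta) \defeq -\chi\,\dv\paren[\big]{(\theta+\bar\rho)\,\grad(-\lap)^{-1}\theta}\,,
\]
and $\rho$ is globally regular with $\rho_t\to\bar\rho$ in $L^2$ precisely when $\theta$ is globally regular with $\theta_t\to 0$ in $L^2$. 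On the unit torus $\bar\rho\le\norm{\rho_0}_{L^2}$ and $\norm{\theta_0}_{L^2}\le\norm{\rho_0}_{L^2}$, so all constants below will depend only on $\chi$ and $\norm{\rho_0}_{L^2}$.

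Next I would verify hypotheses~\ref{A2} and~\ref{A3} for this $N$. Using $\lap c = -\theta$ one has the identity $\dv((\theta+\bar\rho)\grad c)=\grad\theta\cdot\grad c-(\theta+\bar\rho)\theta$, so $N(\theta)=-\chi\,\grad\theta\cdot\grad c+\chi\theta^2+\chi\bar\rho\,\theta$. Since $d\le 3$, elliptic regularity together with the Sobolev embedding gives $\norm{\grad c}_{L^\infty}\le C\norm{c}_{W^{2,p}}\le C\norm{\theta}_{L^p}\le C\norm{\theta}_{H^1}$ for a suitable $p>d$; combined with Gagliardo--Nirenberg control of $\norm{\theta}_{L^4}^2$ and Young's inequality, this yields
\[
  \norm{N(\theta)}_{L^2}\le C\norm{\grad\theta}_{L^2}^2+C\norm{\theta}_{L^2}^2+\chi\bar\rho\norm{\theta}_{L^2}\,,
\]
which is~\ref{A3}. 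For~\ref{A2}, integrating by parts twice gives the key identity
\[
  \int_{\T^d}\theta\,N(\theta)\,dx=\frac{\chi}{2}\int_{\T^d}\theta^3\,dx+\chi\bar\rho\norm{\theta}_{L^2}^2\,,
\]
and, again because $d\le 3$, Gagliardo--Nirenberg gives $\abs{\int\theta^3}\le\norm{\theta}_{L^3}^3\le\delta\norm{\grad\theta}_{L^2}^2+C_\delta\norm{\theta}_{L^2}^{q_d}$ for every $\delta>0$, with $q_d\ge 2$; choosing $\delta$ so that $\frac{\chi\delta}{2}\le\frac12$ yields~\ref{A2} with $\epsilon_0=\frac12$. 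Since both $F$ and $G$ produced this way are bounded near $0$ by a constant multiple of $y$ plus terms of order $\ge y^2$, the vanishing condition~\eqref{e:FGvanish} holds (in particular $N(0)=0$).

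It then remains to combine this with local theory and a bootstrap. Standard parabolic theory for \eqref{e:ks1Drift}, \eqref{e:ks2}, \eqref{e:ks3} with $u$ divergence-free Lipschitz and $\rho_0\in C^\infty$ non-negative produces, on a maximal interval $[0,T)$, a unique solution that is non-negative (by the maximum principle), smooth for $t>0$, and lies in the class~\eqref{e:thetaSpace}; hence $\theta$ is a mild solution of~\eqref{e:theta}. Taking $\tau_0=\tau_0(\norm{\rho_0}_{L^2},\chi)$ to be the constant provided by Theorem~\ref{t:ThetaGrowth} for these $F,G,C_0,\epsilon_0$ and for $\norm{\theta_0}_{L^2}$, the assumption $\tau_*(u)\le\tau_0$ forces $\sup_{t\in[0,T)}\norm{\theta_t}_{L^2}\le 2\norm{\theta_0}_{L^2}+1$. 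A parabolic-smoothing bootstrap upgrades this a priori $L^2$ bound to uniform control of higher Sobolev norms on $[1,T)$, which rules out blow-up, so $T=\infty$ and $\rho$ is globally regular; the decay~\eqref{4.2} is then the second conclusion of Theorem~\ref{t:ThetaGrowth}, valid because~\eqref{e:FGvanish} holds and $T=\infty$. The final assertion follows by applying Theorem~\ref{C.7.6} with $T=\tau_0$, which yields a smooth time-independent cellular flow $u$ on $\T^d$ with $\tau_*(u)\le\tau_0$.

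The step I expect to be the main obstacle is the verification of~\ref{A2}. The chemotaxis nonlinearity is energy-critical, and the cubic term $\frac{\chi}{2}\int\theta^3$ can be absorbed into an \emph{arbitrarily small} multiple of $\norm{\grad\theta}_{L^2}^2$ only when $d\le 3$ --- for $d\ge 4$ the relevant Gagliardo--Nirenberg exponent on $\norm{\grad\theta}_{L^2}$ is at least $2$, leaving no small constant --- so this is exactly where the hypothesis $d\in\set{2,3}$ enters. A secondary, more routine, difficulty is establishing the local theory in the precise mild-solution class required by Theorem~\ref{t:ThetaGrowth} and running the regularity bootstrap from the $L^2$ bound on $\rho$.
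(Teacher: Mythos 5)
Your proposal is correct and follows essentially the same route as the paper: the same reduction $\theta=\rho-\bar\rho$ with $N(\theta)=\chi\,\dv((\theta+\bar\rho)\grad\lap^{-1}\theta)$, the same verification of \ref{A2}--\ref{A3} and \eqref{e:FGvanish} via the identity $\chi^{-1}\int\theta N(\theta)=\frac12\int\theta^3+\bar\rho\norm{\theta}_{L^2}^2$ and Gagliardo--Nirenberg with $\epsilon_0=\frac12$, and the same appeal to Theorem~\ref{C.7.6} for the cellular flow. The one step you gloss over is the ``parabolic-smoothing bootstrap'': the paper makes this precise as a blow-up criterion (Lemma~\ref{l:L2cr}, Thm.~2.1 of Kiselev--Xu), proved by testing the equation against $-\lap\rho$ and applying Gronwall, which shows that finiteness of $\int_0^T\norm{\rho_t-\bar\rho}_{L^2}^{4/(4-d)}\,dt$ rules out blow-up at $T$; your uniform $L^2$ bound (the paper actually uses the exponential decay from Proposition~\ref{p:L2decay}) then makes that integral finite, and this $H^1$ energy estimate is where the remaining real work lies.
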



We note that there are other flows with a simple structure and large amplitudes (but not small dissipation times), namely shear flows, that have been used to a similar effect in the recent work of Bedrossian and He on \eqref{e:ks1Drift}, \eqref{e:ks2}, \eqref{e:ks3} \cite{BedrossianHe17}.  Obviously, our results apply to general flows with small dissipation times, not just the cellular ones, as well as to more general models than Keller-Segel.  But even when it comes to the Keller-Segel model, it is noteworthy to mention certain differences between blow-up supression via cellular and shear flows.  Strong shear flows quickly extend any confined regions with high initial densities into channels that stretch throughout the domain, and essentially remove one dimension from the dynamics (note that the flows on $\mathbb T^2$ from \cite{KiselevXu16} essentially remove both dimensions).  Thanks to this, \cite{BedrossianHe17} was able to obtain no blow-up for general solutions on $\mathbb T^2$, but such a result on $\mathbb T^3$ only holds for initial data with  small enough total mass.

Since streamlines of cellular flows with small cell sizes are very confined, it is more difficult to obtain fast spreading of confined  high-density regions (this requires the advection and diffusion to cooperate),  but then this fast spreading happens throughout the whole domain (as in \cite{KiselevXu16} on $\mathbb T^2$), rather than only across quasi-one-dimensional channels.  This allows us to prevent blow-up for general initial data on $\mathbb T^3$ as well.  

In addition, while we state all our results on the domain $\mathbb T^d$, it is not too difficult to extend them to cubes $[0,1]^d$ (with homogeneous Neumann or Dirichlet boundary conditions), with the same cellular flows providing the necessary small dissipation times (note that such an extension seems not possible for the flows on $\mathbb T^2$ from \cite{KiselevXu16}, or the shear flows on $\mathbb T^d$).  This is important for applications in real-world settings because unlike $\mathbb T^d$, the cube $[0,1]^d$ is actually a subset of $\mathbb R^d$.

%

It is also worthwhile to point out a difference between Theorem \ref{t:KS} and results on domains of infinite volume, where strong convection may easily increase dissipation by simply spreading even large $L^1$ initial data across large regions (see, e.g., \cite{KiselevZlatos06,HeTadmor19} for such results on the plane involving chemotaxis as well as quenching of reactions from the next subsection).  This is clearly not possible bounded domains, the case considered here.

\subsection*{Quenching in models of combustion.}
Reaction-diffusion equations with ignition type nonlinearities are a well established model used to study the temperature of a combusting  substance.
It is well known that if, on a bounded domain, the average temperature is above the ignition threshold initially, then the fuel eventually burns completely.
However, when the average temperature is below the ignition threshold, the fuel may or may not burn completely, and it is possible for the reaction to be quenched.
We will show in Theorem~\ref{t:rd} in Section~\ref{s:rd} below that, in fact, the reaction can always be quenched in this case by the addition of any convective term with a sufficiently small dissipation time.
The proof does not use Theorem \ref{t:ThetaGrowth} but instead involves showing that $\norm{\mathcal S_{0,T}}_{L^1_0 \to L^\infty_0}$ (for any fixed  $T>0$) can be made arbitrarily small by making the dissipation time of the advecting velocity field small enough.

\subsection*{Organization of the paper.}
We prove Theorem \ref{t:ThetaGrowth} in Section 2, Theorem \ref{t:KS} in Section 3, and discuss applications to fluid dynamics and combustion models in Sections 4 and 5, respectively.  Theorem \ref{C.7.6} is proved in Section 6, along with some results concerning asymptotic mean displacement of stochastic processes corresponding to \eqref{e:ad} on $\R^d$ with periodic flows.

\subsection*{Acknowledgements.}

We thank Alexander Kiselev and Bruce Driver for helpful discussions.
We also thank Theo Drivas, Franco Flandoli and the anonymous referees for pointing out an error in an earlier version of this paper.

\section{Uniform energy bounds for general PDEs} \label{s:bounds}

In this section we prove Theorem~\ref{t:ThetaGrowth},
%
%
which is an immediate corollary of the following two results.  

\begin{proposition}\label{p:L2contraction}
Assume that~$N$ satisfies hypotheses~\ref{A2}--\ref{A3},
$u \in L^\infty( [0, \infty) \times \T^d )$
is a divergence-free vector field, and~$\theta$ is a  mild solution to~\eqref{e:theta} such that~\eqref{e:thetaSpace} holds.
  For any $B \geq 0$, let 
  \begin{equation*}
    T_0(B) \defeq \min\set[\Big]{
      \int_{B}^{2B + 1} \frac{y}{ F(y)} \, dy\,,\ 
      \frac{ B}{2C_0 \epsilon_0^{-1}F(2B + 1) + 2  G(2B + 1 ) } }\,.
  \end{equation*}
  If  for some $t_0 \geq 0$ we have $\tau_*(u) \leq T_0(\norm{\theta_{t_0}}_{L^2})$, 
  then
  \begin{subequations}
  \begin{gather}
    \label{e:L2decay1}
      \norm{\theta_{t_0 + n \tau_*(u)})}_{L^2} \leq \norm{\theta_{t_0}}_{L^2}
      \quad\text{for every } n \in \N\,,
      \\
    \label{e:L2decay2}
	\norm{\theta_t}_{L^2} \leq 2 \norm{\theta_{t_0}}_{L^2} + 1
	\quad\text{for every } t > t_0\,.
  \end{gather}
  \end{subequations}
\end{proposition}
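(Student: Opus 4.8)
The plan is to run an energy estimate on the advection-diffusion equation with the nonlinearity treated via Duhamel, in windows of length $\tau \defeq \tau_*(u)$. Fix $t_0 \geq 0$, write $B \defeq \norm{\theta_{t_0}}_{L^2}$, and assume $\tau \leq T_0(B)$. The key is a two-part argument on the single window $[t_0, t_0+\tau]$: first, a \emph{short-time bound} showing that on this whole window one has $\norm{\theta_t}_{L^2} \leq 2B+1$, and second, a \emph{contraction bound} showing that actually $\norm{\theta_{t_0+\tau}}_{L^2} \leq B$. Granting both of these, \eqref{e:L2decay1} follows by iterating the contraction bound over consecutive windows (the hypothesis $\tau \leq T_0(B)$ is preserved since $B$ does not increase), and \eqref{e:L2decay2} follows from the short-time bound applied on each window (any $t>t_0$ lies in some window $[t_0+n\tau, t_0+(n+1)\tau]$ whose left endpoint has $L^2$-norm $\leq B$).

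For the \textbf{short-time bound}, I would use the energy identity: since $\theta \in L^2_{\mathrm{loc}}((0,T),H^1)$ and $u$ is divergence-free, testing \eqref{e:theta} against $\theta$ gives
\begin{equation*}
  \frac12 \frac{d}{dt}\norm{\theta_t}_{L^2}^2 = -\norm{\grad\theta_t}_{L^2}^2 + \int_{\T^d}\theta_t N(\theta_t)\,dx \leq -\epsilon_0\norm{\grad\theta_t}_{L^2}^2 + F(\norm{\theta_t}_{L^2})\,,
\end{equation*}
using \ref{A2}. Dropping the good term, $\frac{d}{dt}\norm{\theta_t}_{L^2}^2 \leq 2F(\norm{\theta_t}_{L^2})$, and a comparison/separation-of-variables argument with the ODE $\dot y = 2F(\sqrt y)$ shows that $\norm{\theta_t}_{L^2}$ cannot reach $2B+1$ before time $\int_B^{2B+1}\frac{y}{F(y)}\,dy$ has elapsed; since $\tau \leq T_0(B)$ is at most this integral, the bound $\norm{\theta_t}_{L^2}\leq 2B+1$ holds throughout $[t_0,t_0+\tau]$. (Care is needed with the lower limit of integration and the continuity of $F$; one should phrase this as: the time to travel from level $B^2$ to level $(2B+1)^2$ under the bounding ODE is at least $T_0(B)$.)

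For the \textbf{contraction bound}, I would use the mild formulation over the window together with the just-established a priori bound. Writing $\theta_{t_0+\tau} = \mathcal S_{t_0,t_0+\tau}\theta_{t_0} + \int_{t_0}^{t_0+\tau}\mathcal S_{s,t_0+\tau}N(\theta_s)\,ds$ and using $\norm{\mathcal S_{s,t_0+\tau}}_{L^2_0\to L^2_0}\leq 1$ together with $\norm{\mathcal S_{t_0,t_0+\tau}}_{L^2_0\to L^2_0}\leq \tfrac12$ (valid since $\tau=\tau_*(u)$), one gets
\begin{equation*}
  \norm{\theta_{t_0+\tau}}_{L^2} \leq \tfrac12 B + \int_{t_0}^{t_0+\tau}\norm{N(\theta_s)}_{L^2}\,ds\,.
\end{equation*}
By \ref{A3}, $\norm{N(\theta_s)}_{L^2}\leq C_0\norm{\grad\theta_s}_{L^2}^2 + G(\norm{\theta_s}_{L^2}) \leq C_0\norm{\grad\theta_s}_{L^2}^2 + G(2B+1)$. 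The term $G(2B+1)\tau$ is harmless: $\tau\leq T_0(B)\leq \frac{B}{2G(2B+1)}$ forces it to be $\leq \tfrac14 B$. The genuinely delicate term is $C_0\int_{t_0}^{t_0+\tau}\norm{\grad\theta_s}_{L^2}^2\,ds$ — \textbf{this is the main obstacle} — and it is controlled by integrating the \emph{full} energy inequality (keeping the good term): $\epsilon_0\int_{t_0}^{t_0+\tau}\norm{\grad\theta_s}_{L^2}^2\,ds \leq \tfrac12\norm{\theta_{t_0}}_{L^2}^2 + \int_{t_0}^{t_0+\tau}F(\norm{\theta_s}_{L^2})\,ds \leq \tfrac12 B^2 + F(2B+1)\tau$, so
\begin{equation*}
  C_0\int_{t_0}^{t_0+\tau}\norm{\grad\theta_s}_{L^2}^2\,ds \leq \frac{C_0}{\epsilon_0}\paren[\big]{\tfrac12 B^2 + F(2B+1)\tau}\,.
\end{equation*}
Here one must be a little careful with the $B^2$ versus $B$ scaling; the cleanest route (and presumably what the statement of $T_0$ is engineered for) is to handle the regime $B\leq 1$ and $B\geq 1$ slightly differently, or to absorb the $\tfrac12 C_0\epsilon_0^{-1}B^2$ contribution using that $\tau\leq \frac{B}{2C_0\epsilon_0^{-1}F(2B+1)}$ bounds the $F(2B+1)\tau$ piece by $\tfrac14 B$ and then observing that the remaining energy-dissipation contribution combines with the $\tfrac12 B$ from the semigroup contraction — I would reconcile the exact constants against the precise definition of $T_0$ in the final writeup. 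Collecting the pieces, $\norm{\theta_{t_0+\tau}}_{L^2}\leq \tfrac12 B + \tfrac14 B + \tfrac14 B = B$, which is the contraction bound. Finally, for \eqref{e:L2decay2} at times $t$ not of the form $t_0+n\tau$, apply the short-time bound on the window containing $t$, whose left endpoint $t_0+n\tau$ satisfies $\norm{\theta_{t_0+n\tau}}_{L^2}\leq B$ by \eqref{e:L2decay1}, giving $\norm{\theta_t}_{L^2}\leq 2B+1$.
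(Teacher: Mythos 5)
Your overall architecture (a short-time doubling bound on each window of length $\tau_*(u)$, a contraction estimate at the right endpoint via the mild formulation, and induction over windows) is exactly that of the paper, and both your short-time bound and your treatment of the $G$ term are correct. However, there is a genuine gap in the contraction step, precisely at the point you flag as ``the main obstacle.'' Bounding $C_0\int_{t_0}^{t_0+\tau}\norm{\grad\theta_s}_{L^2}^2\,ds$ by integrating the full energy inequality produces the term $\frac{C_0}{2\epsilon_0}B^2$, which does not vanish as $\tau\to 0$ and therefore cannot be absorbed by any choice of $T_0(B)$: already for $B\geq \epsilon_0/C_0$ this single term is at least $\frac{B}{2}$, exhausting the entire margin left after the semigroup contraction, so your final accounting $\frac12 B+\frac14 B+\frac14 B=B$ silently drops it. No reconciliation of constants can fix this; a further idea is required.

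The paper's resolution is a dichotomy on the size of the cumulative dissipation over the window. If $\int_s^{s+\tau_*}\norm{\grad\theta_t}_{L^2}^2\,dt \geq \frac{1}{\epsilon_0}\int_s^{s+\tau_*}F(\norm{\theta_t}_{L^2})\,dt$, then the integrated energy inequality alone already gives $\norm{\theta_{s+\tau_*}}_{L^2}\leq\norm{\theta_s}_{L^2}\leq B$, and the Duhamel formula is not needed at all. If this inequality fails, then $C_0\int_s^{s+\tau_*}\norm{\grad\theta_t}_{L^2}^2\,dt < \frac{C_0}{\epsilon_0}\int_s^{s+\tau_*} F(\norm{\theta_t}_{L^2})\,dt \leq \frac{C_0}{\epsilon_0}F(2B+1)\,\tau_*$, which is $O(\tau_*)$ and is exactly what the second term in the definition of $T_0(B)$ is built to absorb, with no $B^2$ contribution. (An equivalent repair within your framework: argue by contradiction that if $\norm{\theta_{t_0+\tau}}_{L^2}>B$, then the energy balance gives $\epsilon_0\int\norm{\grad\theta_s}_{L^2}^2\,ds\leq\int F(\norm{\theta_s}_{L^2})\,ds$ with no $B^2$ term, whereupon your Duhamel estimate forces $\norm{\theta_{t_0+\tau}}_{L^2}\leq B$, a contradiction.) Everything else in your proposal goes through as written.
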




\begin{proposition}\label{p:L2decay}
Assume the hypotheses of Proposition \ref{p:L2contraction} as well as \eqref{e:FGvanish}.
  Let $T_1(0)\defeq \infty$, and for any $B >0$ let
    \begin{equation*}
    T_1(B) \defeq \inf_{b\in(0,B]} \min\set[\Big]{
     \int_{b}^{2 b} \frac{y}{F(y)} \, dy,\ 
      \frac{b}{4C_0\epsilon_0^{-1} F(2 b) + 4  G(2b) } }  \,.
  \end{equation*}
  If  for some $t_0 \geq 0$ we have  $\tau_*(u) \leq T_1(\norm{\theta_{t_0}}_{L^2})$, then
  \begin{equation}\label{e:edecay}
    \norm{\theta_{ t}}_{L^2} \leq  2\Psi^{\lfloor (t-t_0) / \tau_*(u) \rfloor} \left( \norm{\theta_{t_0}}_{L^2} \right)
    \quad\text{for every } t \geq t_0\,,
  \end{equation}
  where $\Psi(B)\defeq B-  \min \{\frac B{16},\frac{\epsilon_0}{8C_0} \} $ and $\Psi^n\defeq \Psi\circ\Psi^{n-1}$ for $n\geq 2$.
\end{proposition}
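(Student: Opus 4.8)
The plan is to follow the scheme of Proposition~\ref{p:L2contraction}'s proof, but with sharper bookkeeping, and to prove by induction on $n\in\N\cup\{0\}$ the following two facts about the checkpoint times $t_n \defeq t_0 + n\tau_*(u)$, where $B\defeq\norm{\theta_{t_0}}_{L^2}$: first, $\norm{\theta_{t_n}}_{L^2}\le\Psi^n(B)$; and second, $\norm{\theta_t}_{L^2}\le 2\norm{\theta_{t_n}}_{L^2}$ for all $t\in[t_n,t_{n+1}]$. Since $\Psi$ is continuous, increasing, and satisfies $0\le\Psi(B)\le B$, and since $\lfloor (t-t_0)/\tau_*(u)\rfloor=n$ for $t\in[t_n,t_{n+1})$, these two facts give~\eqref{e:edecay} at once. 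One may assume $0<\tau_*(u)<\infty$ and $B>0$ (if $\tau_*(u)=0$ then $\norm{\mathcal S_{s,s+t}}_{L^2_0\to L^2_0}\to 1$ as $t\to0^+$, a contradiction; $\tau_*(u)=\infty$ is incompatible with the hypothesis unless $B=0$; and if some $\norm{\theta_{t_n}}_{L^2}=0$ a Gr\"onwall argument using~\eqref{e:FGvanish} forces $\theta\equiv 0$ for $t\ge t_n$, making the claim trivial thereafter). Throughout I would use the energy identity $\frac{d}{dt}\frac12\norm{\theta_t}_{L^2}^2 = -\norm{\grad\theta_t}_{L^2}^2 + \int_{\T^d}\theta_t N(\theta_t)\,dx$ (available since mild solutions in the class~\eqref{e:thetaSpace} are weak, Section~\ref{s:bounds}), and the restarted mild formula $\theta_t = \mathcal S_{t_n,t}\theta_{t_n} + \int_{t_n}^t \mathcal S_{s,t}N(\theta_s)\,ds$, which follows from the two-parameter semigroup law for $\mathcal S$.

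Fix $n$ and write $B_n\defeq\norm{\theta_{t_n}}_{L^2}\in(0,B]$, using the induction hypothesis $B_n\le\Psi^n(B)\le B$. For the within-interval bound, hypothesis~\ref{A2} gives $\frac{d}{dt}\norm{\theta_t}_{L^2}^2\le 2F(\norm{\theta_t}_{L^2})$; comparing with the ODE $\dot z = F(z)/z$ started at $z(t_n)=B_n$, which first reaches $2B_n$ only at time $\int_{B_n}^{2B_n} y/F(y)\,dy \ge T_1(B)\ge \tau_*(u)$ — the first inequality holding because $B_n\in(0,B]$ and $T_1(B)$ is an infimum over exactly such scales — shows $\norm{\theta_t}_{L^2}\le 2B_n$ on $[t_n,t_{n+1}]$. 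This is the second fact for this $n$, and (with $n$ replaced by $0$) the base case.

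The heart of the argument is the checkpoint contraction $a\le\Psi(B_n)$, where $a\defeq\norm{\theta_{t_{n+1}}}_{L^2}$ and $I\defeq\int_{t_n}^{t_{n+1}}\norm{\grad\theta_s}_{L^2}^2\,ds$. Taking $L^2$-norms in the restarted mild formula at $t=t_{n+1}$, using $\norm{\mathcal S_{t_n,t_{n+1}}}_{L^2_0\to L^2_0}\le\frac12$ (as $t_{n+1}-t_n=\tau_*(u)$), $\norm{\mathcal S_{s,t_{n+1}}}_{L^2_0\to L^2_0}\le 1$, hypothesis~\ref{A3}, and the within-interval bound $\norm{\theta_s}_{L^2}\le 2B_n$, gives
\begin{equation*}
  a \le \tfrac12 B_n + C_0 I + \tau_*(u)\,G(2B_n).
\end{equation*}
Integrating the energy identity over $[t_n,t_{n+1}]$ and applying~\ref{A2} yields $\epsilon_0 I \le \frac12 B_n^2 - \frac12 a^2 + \tau_*(u)F(2B_n)$ — and it is essential here to keep the term $-\frac12 a^2$ rather than discard it. Substituting $I\le\frac1{2\epsilon_0}(B_n^2-a^2) + \frac1{\epsilon_0}\tau_*(u)F(2B_n)$, and bounding $\frac{C_0}{\epsilon_0}\tau_*(u)F(2B_n) + \tau_*(u)G(2B_n) = \tau_*(u)\bigl(C_0\epsilon_0^{-1}F(2B_n)+G(2B_n)\bigr)\le\frac{B_n}{4}$ by the second term in the definition of $T_1(B)$ (evaluated at the scale $B_n\le B$), gives $a\le\frac34 B_n + \frac{C_0}{2\epsilon_0}(B_n^2-a^2)$. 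If $a\ge B_n$ the right-hand side is $<B_n$, a contradiction, so $a<B_n$; then $B_n^2-a^2\le 2B_n(B_n-a)$, and solving the resulting linear inequality for $B_n-a$ gives $B_n-a \ge \frac{\epsilon_0 B_n}{4(\epsilon_0+C_0 B_n)} \ge \min\bigl\{\frac{B_n}{16},\frac{\epsilon_0}{8C_0}\bigr\}$, i.e.\ $a\le\Psi(B_n)$. Since $\Psi$ is increasing and $B_n\le\Psi^n(B)$, this yields $a\le\Psi^{n+1}(B)$ and closes the induction.

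I expect the main obstacle to be exactly the coupling exploited in the third paragraph. A crude estimate — bounding $C_0 I$ by a multiple of $\frac{C_0}{\epsilon_0}B_n^2$ after discarding $-\frac12 a^2$ — loses a quantity comparable to $B_n$ as soon as $B_n$ exceeds a fixed multiple of $\epsilon_0/C_0$, and is useless there; retaining $-\frac12 a^2$ turns the mild and energy estimates into the self-improving inequality $a\le\frac34 B_n + \frac{C_0}{2\epsilon_0}(B_n^2-a^2)$, which with no case distinction produces both the multiplicative gain (factor $\frac{15}{16}$) for small $B_n$ and the additive gain ($\frac{\epsilon_0}{8C_0}$) for large $B_n$ that are packaged into $\Psi$. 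A secondary point requiring care is that the checkpoint value $B_n$ is only known to lie in $(0,B]$, so both the ODE-comparison step and the $\frac{B_n}{4}$ bound must hold uniformly over all scales $b\in(0,B]$; this is precisely why $T_1(B)$ is defined as an infimum over such $b$, and why~\eqref{e:FGvanish} (which forces $F(y)$ to be $O(y^2)$ and $G(y)$ to be $O(y)$ near $0$) is needed to keep that infimum positive.
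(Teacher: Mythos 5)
Your proof is correct, and its skeleton is the same as the paper's: induct over the checkpoint times $t_0+n\tau_*(u)$, use the energy inequality from~\ref{A2} together with the ODE comparison encoded in the first term of $T_1$ to get the doubling bound $\norm{\theta_t}_{L^2}\le 2\norm{\theta_{t_n}}_{L^2}$ on each interval, and extract the per-step contraction from the interplay between the Duhamel formula (where $\norm{\mathcal S_{t_n,t_{n+1}}}_{L^2_0\to L^2_0}\le\frac12$ enters) and the integrated energy inequality (which controls $I=\int_{t_n}^{t_{n+1}}\norm{\grad\theta_s}_{L^2}^2\,ds$). The one genuine difference is how that contraction is extracted. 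The paper runs a dichotomy on the size of $I$ against a threshold tuned with the auxiliary constant $C_B=\max\{C_0B,2\epsilon_0\}$ (see~\eqref{e:h1LargeN1}): if $I$ exceeds the threshold the energy inequality alone contracts, and otherwise the Duhamel estimate does. You instead retain the $-\frac12\norm{\theta_{t_{n+1}}}_{L^2}^2$ term when solving the energy inequality for $I$, substitute into the Duhamel estimate, and solve the resulting quadratic inequality $a\le\frac34B_n+\frac{C_0}{2\epsilon_0}(B_n^2-a^2)$; this removes the case split and the constant $C_B$, and your bound $B_n-a\ge\frac{\epsilon_0B_n}{4(\epsilon_0+C_0B_n)}$ interpolates the two regimes of $\Psi$ in one stroke (it is in fact marginally stronger, giving the gain $\min\{\frac{B_n}{8},\frac{\epsilon_0}{8C_0}\}$). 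Everything else --- the use of the infimum over $b\in(0,B]$ in $T_1$ to handle the unknown scale of $\norm{\theta_{t_n}}_{L^2}$, the role of~\eqref{e:FGvanish} in keeping that infimum positive, and the treatment of the degenerate case $\norm{\theta_{t_n}}_{L^2}=0$ --- matches the paper.
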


\begin{remark*}
Hypothesis~\eqref{e:FGvanish} ensures that $T_1(B)>0$ for every $B>0$, because  $\int_{b}^{2 b} \frac{y}{y^\alpha} \, dy$ is uniformly bounded in all small $b>0$ when $\alpha\leq 2$.
\end{remark*}

\begin{remark}\label{r:decayRate}
The time decay obtained in~\eqref{e:edecay} is exponential.
The exponential decay rate is $\tau_*(u)^{-1}|\ln (1-r)| $, where $r$ is initially
\begin{equation*}
  \min \set[\Big]{ \frac 1 {16},\frac{\epsilon_0}{8C_0 \norm{\theta_{t_0}}_{L^2}} }\,,
\end{equation*}
and once $t$ becomes large enough, $r$ changes to $\frac 1{16}$.
By optimizing the proof further, this number can be increased to any number smaller than
\begin{equation*}
\frac 1{2}-\lim_{B\to0+} \frac {\tau_*(u)}{4T_1(B)}\,.
\end{equation*}
\end{remark}
\smallskip


Before proving these results, let us note
%
%
%
that our mild solutions are also weak solutions.  We say that $\theta$ is a {\it weak solution} to~\eqref{e:theta} on $(0,T)\times\T^d$ if it is a distributional solution to the PDE, and  the equality in~\eqref{e:theta} holds in $H^{-1}(\T^d)$ at almost every $t\in(0,T)$.

If now $\theta$ as in \eqref{e:thetaSpace} is a mild solution to~\eqref{e:theta} on $(0,T)\times\T^d$, then standard smoothing properties of $S_{s,t}$ when $t-s$ is uniformly positive show that
\[
    \theta_t^\delta \defeq \mathcal S_{0, t} \theta_0
	+ \int_0^{t-\delta} \mathcal S_{s, t}  N(\theta_s) \, ds
\]
is a strong and therefore also distributional solution to~\eqref{e:theta} with $N(\theta_t)$ replaced by $S_{t-\delta,t} N(\theta_{t-\delta})$, on $(\delta,T)\times\T^d$.  
Taking $\delta \to 0$ in the definition of distributional solutions, and using \eqref{e:thetaSpace}, \ref{A3}, and measurability of $N$ (which yield $N(\theta)\in L^1_{\mathrm{loc}}((0,T), L^2(\T^d))$), as well as $\|S_{s,t}\|_{L^p\to L^p}\le 1$, now shows that  $\theta$ is a distributional solution to~\eqref{e:theta} on $(0,T)\times\T^d$.

Since \eqref{e:thetaSpace} and $u \in L^\infty( (0, \infty) \times \T^d)$ yield $\Delta\theta\in L^2_{\mathrm{loc}}((0,T), H^{-1}(\T^d))$ and $u\cdot\nabla\theta,N(\theta)\in L^1_{\mathrm{loc}}((0,T), L^2(\T^d))$, it follows that
\begin{equation}
\label{2.1a}
\partial_t  \theta 
 \in L^2_{\mathrm{loc}}((0,T), H^{-1}(\T^d)) + L^1_{\mathrm{loc}}((0,T), L^2(\T^d))\,.
\end{equation}
Hence, the equality in~\eqref{e:theta} holds at almost every $t\in(0,T)$ in $H^{-1}(\T^d)$, so $\theta$ is indeed a weak solution to~\eqref{e:theta} on $(0,T)\times\T^d$.

\begin{proof}[Proof of Proposition~\ref{p:L2contraction}]
  Without loss of generality, we assume $t_0 = 0$.
  For notational convenience let $B\defeq \norm{\theta_{0}}_{L^2}$ and $\tau_*=\tau_*(u)$.


  Let us now assume that $\norm{\theta_{s}}_{L^2}\leq B$ for some $s\geq 0$ (which holds for $s=0$).
  Note that similarly to~\cite[Theorem~5.9.3]{Evans98}, one can use \eqref{2.1a} and \eqref{e:thetaSpace} to show that~$\norm{\theta_t}_{L^2}^2$ is a locally absolutely continuous function of~$t$, whose derivative equals  $2\int_{\T^d} \partial_t\theta_t \, \theta_t \,dx$ (the integral representing a pairing of elements from $H^{-1}(\T^d)$ and $H^1(\T^d)$) at almost every $t\in(0,T)$.
  Hence, multiplying~\eqref{e:theta} by~$\theta_t\in H^1(\T^d)$ and integrating in space yields
  \begin{align}
    \nonumber
    \frac{1}{2} \partial_t \norm{\theta_t}_{L^2}^2
      + \norm{\grad \theta_t}_{L^2}^2
      &\leq \abs[\Big]{ \int_{\T^d} \theta_t N(\theta_t) \, dx }
    \\
    \label{e:eeN}
      &\leq (1 - \epsilon_0) \norm{\grad \theta_t}_{L^2}^2
	+ F(\norm{\theta_t}_{L^2})\,
  \end{align}
for almost every $t\in(0,T)$,  so for these $t$ we have
  \begin{equation}\label{e:ee1}
     \partial_t \norm{\theta_t}_{L^2} \norm{\theta_t}_{L^2} \leq F( \norm{\theta_t}_{L^2} )\,.
  \end{equation}
  Thus, for almost every~$t\in(0,T)$ we have 
  \[
    \partial_t \int_{B}^{\norm{\theta_t}_{L^2}} \frac y{F(y)} dy = \frac {\partial_t \norm{\theta_t}_{L^2} \norm{\theta_t}_{L^2}} { F( \norm{\theta_t}_{L^2} )} \leq 1.
  \]
Since $\int_{B}^{\norm{\theta_s}_{L^2}} \frac y{F(y)} dy\leq 0$, it follows that for all $t\in [s,s+T_0(B)]$ we have
     \begin{equation}\label{e:doublingTime}
 \int_{B}^{\norm{\theta_t}_{L^2}} \frac y{F(y)} dy \leq 
   t-s \leq T_0(B) \leq 
      \int_{B}^{2B + 1} \frac{y}{ F(y)} \, dy \,,
  \end{equation}
   which forces $\norm{\theta_t}_{L^2}\leq 2B+1$ for all $t\in [s,s+T_0(B)]$.

  Next, integrating~\eqref{e:eeN} in time yields
  \begin{equation}\label{e:eeInt}
    \norm{\theta_{s+\tau_*}}_{L^2}^2 - \norm{\theta_s}_{L^2}^2
      \leq 2 \int_s^{s+\tau_*} F(\norm{\theta_t}_{L^2}) \, dt
	- 2\epsilon_0 \int_s^{s+\tau_*} \norm{\grad \theta_t}_{L^2}^2 \, dt\,.
  \end{equation}
  If
  \begin{equation}\label{e:h1LargeN}
    \int_s^{s+\tau_*}
      \norm{\grad \theta_t}_{L^2}^2 \, dt
	\geq \frac{1}{\epsilon_0} \int_s^{s+\tau_*} F(\norm{\theta_t}_{L^2}) \, dt\,,
  \end{equation}
  then the right hand side of~\eqref{e:eeInt} is at most~$0$, and hence
$\norm{\theta_{s+\tau_*}}_{L^2}\leq B$.
  If \eqref{e:h1LargeN} fails, then
from
  \begin{equation*}
    \theta_{ s+\tau_*}
      = \mathcal S_{s, s+\tau_*} \theta_s
	+ \int_s^{s+\tau_*} \mathcal S_{t, s+\tau_*}  N(\theta_t) \, dt\,
  \end{equation*}
we obtain
  \begin{align}
    \nonumber
    \norm{\theta_{s+\tau_*}}_{L^2}
      &\leq
	\norm{ \mathcal S_{s, s+\tau_*} \theta_s }_{L^2}
	+ \int_s^{s+\tau_*} \norm[\big]{ \mathcal S_{t, s+\tau_*} N(\theta_t) }_{L^2} \, dt
    \\
    \nonumber
      &\leq \frac{B}{2} + \int_s^{s+\tau_*} \norm{N( \theta_t)}_{L^2}  \, dt
    \\
    \nonumber
      &\leq
	\frac{B}{2} + \int_s^{s+\tau_*} \paren[\big]{ C_0 \norm{\grad \theta_t}_{L^2}^2 + G(\norm{\theta_t}_{L^2} ) }\, dt
	\\
	    \label{e:gi5N}
	&\leq
	\frac{B}{2} + \int_s^{s+\tau_*} \paren[\Big]{\frac{C_0}{\epsilon_0} F(\norm{\theta_t}_{L^2}) + G(\norm{\theta_t}_{L^2} ) }\, dt\,.
  \end{align}
Since $\tau_*\leq T_0(B)$ (so that $\norm{\theta_t}_{L^2}\leq 2B+1$ for all $t$ in the integral), it follows that
  \begin{equation*}
    \norm{\theta_{s+\tau_*}}_{L^2}
      \leq \frac{B}{2} +\paren[\Big]{\frac{C_0}{\epsilon_0} F(2B + 1) + G(2B + 1) }  \tau_* \,\leq B.
  \end{equation*}
  
From this and $\norm{\theta_t}_{L^2}\leq 2B+1$ for all $t\in [s,s+\tau_*]$ both holding whenever $\norm{\theta_s}_{L^2}\leq B$ (in particular, when $s=0$), the claim follows via induction on $n$.
\end{proof}


\begin{proof}[Proof of Proposition~\ref{p:L2decay}]
  Without loss of generality, we again assume $t_0 = 0$.
  We again also denote $B\defeq \norm{\theta_{0}}_{L^2}$ and  $\tau_*=\tau_*(u)$.
  By 
  \eqref{e:FGvanish} and continuity of $G$ we have $G(0)=0$, therefore $N(0)=0$ by  \ref{A3}.
  This proves the claim when $\theta_0=0$, so let us now assume that $B>0$.
  
  As in the previous proof, from~\eqref{e:ee1} we obtain
  \begin{equation}\label{e:doubling1}
    \norm{\theta_t}_{L^2} \leq 2 \norm{\theta_s}_{L^2}
    \quad\text{whenever}\quad
      t\in \left[s,
	s+\int_{\norm{\theta_s}_{L^2}}^{2 \norm{\theta_s}_{L^2}}
	  \frac{y}{F(y)} \, dy \right] \,.
  \end{equation}

  Next, let $C_B\defeq \max\{C_0B,2\epsilon_0\}$.  From~\eqref{e:eeInt} we see that if
  \begin{equation}\label{e:h1LargeN1}
    \int_0^{\tau_*}
      \norm{\grad \theta_t}_{L^2}^2 \, dt
	\geq \frac{1}{\epsilon_0} \int_0^{\tau_*} F(\norm{\theta_t}_{L^2}) \, dt
	  + \frac{C_B-\epsilon_0}{4 C_B^2} B^2\,,
  \end{equation}
  then we obtain (also using $\epsilon_0\leq C_B/2$ and the definition of $C_B$)
  \begin{align}
    \nonumber
    \norm{\theta_{\tau_*}}_{L^2}
      &\leq \paren[\Big]{1-\frac{\epsilon_0(C_B-\epsilon_0)}{2 C_B^2} }^{1/2} B
    \\
    \label{e:dbl1}
    &\leq \paren[\Big]{ 1- \frac{\epsilon_0}{8 C_B} } B
    \leq \paren[\Big]{ 1-
      \min\set[\Big]{\frac 1{16} ,\frac{\epsilon_0}{8C_0B} } }
      B \,.
  \end{align}
  And if \eqref{e:h1LargeN1} fails, then the argument proving \eqref{e:gi5N} shows that
  \begin{equation*}
    \norm{\theta_{\tau_*}}_{L^2} \leq
	\frac{B}{2} + \int_0^{\tau_*} \paren[\Big]{\frac{C_0}{\epsilon_0} F(\norm{\theta_t}_{L^2}) + G(\norm{\theta_t}_{L^2} ) }\, dt + \frac{C_0(C_B-\epsilon_0)}{4 C_B^2} B^2\,.
  \end{equation*}
  From~\eqref{e:doubling1} and $F,G$ being increasing, we see that if $\tau_* \leq T_1(B)$, then the integral is bounded above by $\frac B4$ and so
  \begin{equation*}
    \norm{\theta_{\tau_*}}_{L^2} 
    	\leq \left( \frac 34  +\frac{C_0C_B-\epsilon_0}{4 C_B^2} B \right) B
	 \leq \left( 1-  \min\left\{\frac 1{8},\frac{\epsilon_0}{4C_0B} \right\} \right) B\,.
  \end{equation*}
  
Applying this argument iteratively on time intervals $[(n-1)\tau_*,n\tau_*]$ for $n\in\N$, and with $B$ replaced by $\norm{\theta_{(n-1)\tau_*}}_{L^2}$, shows that
  \begin{equation}\label{e:edecay1}
    \norm{\theta_{n \tau_*}}_{L^2} \leq 
    \Psi^n(\norm{\theta_0}_{L^2}) 
  \end{equation}
  for all $n \in \N$.
  From~\eqref{e:doubling1} we know that $\norm{\theta_t}_{L^2}$ at most doubles at times between integer multiples of $\tau_*$.
  Combining this fact with~\eqref{e:edecay1} yields~\eqref{e:edecay}, as desired.
\end{proof}

\section{Preventing blow-up of Keller-Segel dynamics}

In this section we prove Theorem \ref{t:KS}.
For the sake of convenience, we denote
\begin{equation*}
  \grad^{-1} \defeq \grad \lap^{-1}\,.
\end{equation*}
That is, we let $\grad^{-1} \psi \defeq \grad \phi$, where $\phi$ solves $\lap \phi = \psi$ on $\T^d$.
With this notation, the first two equations of the Keller-Segel system~\eqref{e:ks1Drift}, \eqref{e:ks2}, \eqref{e:ks3} read
\begin{equation}\label{e:chemo1}
  \partial_t\rho + u\cdot\grad \rho = \lap \rho
    + \chi \grad\cdot(\rho \grad^{-1}(\rho-\bar{\rho}))\,.
\end{equation}


The main idea here is to use the results from Section \ref{s:bounds} to show that $\|\rho_t\|_{L^2}$ remains uniformly bounded in time.  
Once this is established, well known results can be used to obtain global regularity, and then~\eqref{4.2} will be obtained from \eqref{e:edecay}.

\begin{lemma}\label{l:KSL2}
  Let $\rho$ be a smooth solution to~\eqref{e:ks1Drift}, \eqref{e:ks2}, \eqref{e:ks3}  on the time interval $[0, T)$, and let $t_0\in [0,T)$.
  There are  $c = c(\chi\norm{\rho_{t_0} - \bar \rho}_{L^2}) > 0$ and $\tau_1 = \tau_1(\norm{\rho_{t_0} - \bar \rho}_{L^2}, \bar \rho,\chi) > 0$
  such that if $\tau_* (u) \leq \tau_1$ for the incompressible drift $u$,
  then 
  \begin{equation}\label{e:ks1L2decay}
    \norm{\rho_t - \bar \rho}_{L^2} \leq 3 e^{-c (t - t_0)/\tau_*(u)} \norm{\rho_{t_0} - \bar \rho}_{L^2}\,,
  \end{equation}
  for all $t \in [t_0, T)$.
  Moreover, $\tau_1$ and $c$ can be chosen to be decreasing in each argument.
\end{lemma}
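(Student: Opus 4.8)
The plan is to verify that the nonlinearity $N(\rho - \bar\rho) = \chi\,\grad\cdot\bigl(\rho\,\grad^{-1}(\rho - \bar\rho)\bigr)$ fits the framework of Section~\ref{s:bounds}, so that Proposition~\ref{p:L2decay} applies to $\theta \defeq \rho - \bar\rho$, and then to track the constants carefully enough to extract the explicit exponential rate and the factor $3$. Since $\bar\rho$ is conserved by~\eqref{e:ks1Drift}, \eqref{e:ks2}, \eqref{e:ks3} (integrate~\eqref{e:ks1Drift} over $\T^d$ using that $u$ and the flux are divergence-free), the function $\theta = \rho - \bar\rho$ is mean-zero and solves~\eqref{e:theta} with this $N$ and the same $u$. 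So the first task is to check hypotheses~\ref{A2}, \ref{A3}, and the vanishing condition~\eqref{e:FGvanish} for $N$ on $\T^d$ with $d\in\{2,3\}$.

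For~\ref{A2}: writing $\int \varphi N(\varphi) = -\chi\int \grad\varphi\cdot(\rho\,\grad^{-1}\varphi)\,dx$ with $\rho = \varphi + \bar\rho$, I split into the $\bar\rho$ term and the $\varphi$ term. The $\bar\rho$ term is $-\chi\bar\rho\int\grad\varphi\cdot\grad^{-1}\varphi\,dx = -\chi\bar\rho\int\grad\varphi\cdot\grad\lap^{-1}\varphi\,dx = \chi\bar\rho\|\varphi\|_{L^2}^2$ after integrating by parts (this is $\leq$ a multiple of $\|\varphi\|_{L^2}^2$, hence absorbable into $F$). The genuinely nonlinear term $-\chi\int\grad\varphi\cdot(\varphi\,\grad^{-1}\varphi)\,dx$ is estimated by Hölder as $\chi\|\grad\varphi\|_{L^2}\|\varphi\|_{L^4}\|\grad^{-1}\varphi\|_{L^4}$; by elliptic regularity $\|\grad^{-1}\varphi\|_{L^4} = \|\grad\lap^{-1}\varphi\|_{L^4} \lesssim \|\varphi\|_{L^{q}}$ for a suitable $q<4$ (Sobolev embedding $W^{1,q}\hookrightarrow L^4$ in $d\le 3$), and then Gagliardo--Nirenberg interpolates $\|\varphi\|_{L^4}$ and $\|\varphi\|_{L^q}$ between $\|\grad\varphi\|_{L^2}$ and $\|\varphi\|_{L^2}$. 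The key point is that the total power of $\|\grad\varphi\|_{L^2}$ coming out is strictly less than $2$ in both $d=2$ and $d=3$ (in $d=2$ it is close to $1$; in $d=3$ one gets exactly the critical-looking exponent but still $<2$ after the Poincaré inequality $\|\varphi\|_{L^2}\le C\|\grad\varphi\|_{L^2}$ is used only on the lower-order part), so Young's inequality yields $\chi\cdot(\text{stuff}) \le \epsilon_0\|\grad\varphi\|_{L^2}^2/\ldots$; wait — more carefully, Young gives a bound $\le \delta\|\grad\varphi\|_{L^2}^2 + C_\delta (\text{polynomial in }\|\varphi\|_{L^2})$ with $\delta$ as small as we like, so \ref{A2} holds with $\epsilon_0$ arbitrarily close to $1$ and $F(y) = C(\chi,\bar\rho)\,y^2 + C(\chi,\bar\rho,\delta)\,P(y)$ for an explicit polynomial $P$. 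Similarly for~\ref{A3}: $\|N(\varphi)\|_{L^2} = \chi\|\grad\cdot(\rho\grad^{-1}\varphi)\|_{L^2} \le \chi\|\grad\rho\cdot\grad^{-1}\varphi\|_{L^2} + \chi\|\rho\,\varphi\|_{L^2}$; the second term is handled like above (product of $L^4$ norms, interpolation, Young giving $\le C_0\|\grad\varphi\|_{L^2}^2 + G(\|\varphi\|_{L^2})$), the first is $\le\chi\|\grad\varphi\|_{L^\infty\text{-type}}\cdots$ — actually one bounds $\|\grad\rho\cdot\grad^{-1}\varphi\|_{L^2}=\|\grad\varphi\cdot\grad^{-1}\varphi\|_{L^2}\le\|\grad\varphi\|_{L^2}\|\grad^{-1}\varphi\|_{L^\infty}$ and $\|\grad^{-1}\varphi\|_{L^\infty}\lesssim\|\varphi\|_{H^{s}}$ for $s>d/2-1$, which in $d=3$ is $s>1/2$, not controlled by $\|\grad\varphi\|_{L^2}$ alone — so one instead uses $\|\grad\varphi\,\grad^{-1}\varphi\|_{L^2}\le\|\grad\varphi\|_{L^4}\|\grad^{-1}\varphi\|_{L^4}$ and interpolates, giving again $C_0\|\grad\varphi\|_{L^2}^2 + G$; the quadratic-in-$\|\grad\varphi\|_{L^2}$ form of~\ref{A3} (rather than linear) is exactly what makes this work. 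Since $F(y), G(y) = O(y^2)$ near $0$, condition~\eqref{e:FGvanish} holds. Note all of $F$, $G$, $C_0$, $\epsilon_0$ depend only on $\chi$ and $\bar\rho$, with $F,G$ increasing in $\chi\|\rho_{t_0}-\bar\rho\|_{L^2}$-relevant quantities.

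With hypotheses verified, I apply Proposition~\ref{p:L2decay} with this $t_0$ and $B = \|\rho_{t_0}-\bar\rho\|_{L^2}$: set $\tau_1 \defeq T_1(B)$, which is positive by the remark following Proposition~\ref{p:L2decay}. Then~\eqref{e:edecay} gives $\|\rho_t - \bar\rho\|_{L^2} \le 2\Psi^{\lfloor (t-t_0)/\tau_*\rfloor}(B)$ for all $t\in[t_0,T)$; using $\Psi(B') \le (1-r)B'$ with $r = \min\{\tfrac1{16}, \tfrac{\epsilon_0}{8C_0 B}\}$ (per Remark~\ref{r:decayRate}), iterating gives $\Psi^n(B)\le (1-r)^n B$, and $(1-r)^{\lfloor s\rfloor} \le (1-r)^{-1}(1-r)^s \le (1-r)^{-1}e^{-rs}$ — choosing $c \defeq r\,/\,\text{(a harmless constant)}$ absorbing the $(1-r)^{-1}$ into the constant in front (which is why the prefactor is $3$ rather than $2$: $2(1-r)^{-1} \le 2\cdot\tfrac{16}{15} < 3$). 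This yields~\eqref{e:ks1L2decay} with $c = c(\chi\|\rho_{t_0}-\bar\rho\|_{L^2})$ decreasing in its argument (since $\epsilon_0/(8C_0B)$ decreases in $B$), and $\tau_1 = T_1(B)$, which by the explicit form of $T_1$ and monotonicity of $F,G$ is decreasing in $B=\|\rho_{t_0}-\bar\rho\|_{L^2}$ and in $\chi$, and depends on $\bar\rho$ as stated. The statement ``$\tau_1$ and $c$ decreasing in each argument'' then follows by replacing $T_1(B)$ and the rate by their infima over the relevant ranges if necessary.

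The main obstacle is the verification of~\ref{A2}--\ref{A3}, specifically getting the power of $\|\grad\varphi\|_{L^2}$ strictly below $2$ in the nonlinear estimate in dimension $d=3$, which is the critical dimension for Keller--Segel; the Gagliardo--Nirenberg / elliptic-regularity bookkeeping must be done with the right exponents ($\|\grad^{-1}\varphi\|_{L^4}\lesssim\|\varphi\|_{L^{12/7}}$ in $d=3$, etc.) so that Young's inequality can be invoked with the loss going entirely onto $\|\varphi\|_{L^2}$, on which we then have Poincaré control. Everything after that — applying Proposition~\ref{p:L2decay} and converting the discrete $\Psi$-iteration into a clean exponential with prefactor $3$ — is routine constant-chasing, where the only care needed is to confirm the monotonicity of $\tau_1$ and $c$ in each argument.
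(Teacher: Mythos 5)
Your overall strategy coincides with the paper's: set $\theta=\rho-\bar\rho$, verify~\ref{A2}, \ref{A3} and~\eqref{e:FGvanish} for $N(\theta)=\chi\dv\big((\theta+\bar\rho)\grad^{-1}\theta\big)$, invoke Proposition~\ref{p:L2decay}, and then do the constant-chasing that turns the $\Psi$-iteration into the exponential bound with prefactor $3$. Your~\ref{A2} verification, although it discards the exact cancellation $\int_{\T^d}\theta\,\grad\theta\cdot\grad^{-1}\theta\,dx=-\tfrac12\int_{\T^d}\theta^3\,dx$ that the paper exploits to reduce everything to $\norm{\theta}_{L^3}^3$, still closes: H\"older plus Gagliardo--Nirenberg gives gradient power $7/4$ in $d=3$ (and $3/2$ in $d=2$), strictly below $2$, so Young applies. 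The endgame (prefactor $3$ from $2(1-r)^{-1}\le 32/15$, monotonicity of $c$ and $\tau_1$) is fine and in fact more explicit than what the paper records.

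There is, however, a genuine gap in your verification of~\ref{A3}, at the term $\norm{\grad\theta\cdot\grad^{-1}\theta}_{L^2}$. You bound it by $\norm{\grad\theta}_{L^4}\norm{\grad^{-1}\theta}_{L^4}$ and propose to ``interpolate,'' but $\norm{\grad\theta}_{L^4}$ is \emph{not} controlled by $\norm{\grad\theta}_{L^2}^2+G(\norm{\theta}_{L^2})$ for general $\theta\in H^1$: Gagliardo--Nirenberg gives $\norm{\grad\theta}_{L^4}\lesssim\norm{\grad\theta}_{L^2}^{1-d/4}\norm{D^2\theta}_{L^2}^{d/4}$, and second derivatives are not at your disposal in~\ref{A3}. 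Any H\"older split $L^p\times L^q$ with $p>2$ on the gradient factor has the same defect, so the only viable pairing is $L^2\times L^\infty$ --- precisely the route you dismissed. Your reason for dismissing it is incorrect: on the torus, for mean-zero $\theta$, one has $\norm{\theta}_{\dot H^{s}}\le C\norm{\grad\theta}_{L^2}$ for every $s\in(0,1]$, since all nonzero frequencies satisfy $|k|\ge 1$; hence $\norm{\grad^{-1}\theta}_{L^\infty}\lesssim\norm{\theta}_{\dot H^{d/2-1+\epsilon}}\le C\norm{\grad\theta}_{L^2}$ for $d=3$ and $\epsilon<\tfrac12$, so $\norm{\grad\theta}_{L^2}\norm{\grad^{-1}\theta}_{L^\infty}\le C\norm{\grad\theta}_{L^2}^2$. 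This is exactly what the paper does (together with $\norm{\theta}_{L^4}^2\lesssim\norm{\theta}_{\dot H^{d/4}}^2\le C\norm{\grad\theta}_{L^2}^2$ for the other quadratic term). With that correction your argument is complete and essentially identical to the paper's.
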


\begin{proof}
Notice that if we let
  \begin{gather*}
    \theta \defeq \rho - \bar \rho\,,
    \\
    N(\theta) \defeq  \chi \dv \paren[\big]{ (\theta + \bar \rho) \grad^{-1} \theta}\,,
  \end{gather*}
  then~$\theta$ satisfies~\eqref{e:theta} (here we think of $\bar\rho_0$ as a parameter) and $N(\theta)$ is mean-zero.
  Thus, in order to apply Proposition~\ref{p:L2decay}, we only need to verify hypotheses~\ref{A2}--\ref{A3} and \eqref{e:FGvanish}.
  For~\ref{A2}, we compute
  \begin{align}
    \nonumber
    \chi^{-1}\int_{\T^d} \theta N(\theta) \, dx
      &= \int_{\T^d} (\theta + \bar \rho) \theta^2 \, dx
	+  \int_{\T^d} \theta \, \grad \theta \cdot \grad^{-1} \theta  \, dx
    \\
    \label{e:gi3}
      &= \frac{1}{2} \norm{\theta}_{L^3}^3
	+  \bar \rho \norm{\theta}_{L^2}^2
  \end{align}
because
\[
\int_{\T^d} \theta \, \grad \theta \cdot \grad^{-1} \theta  \, dx =
    \frac 12 \int_{\T^d} \grad \theta^2  \cdot \grad^{-1} \theta  \, dx
= -\frac{1}{2} \int_{\T^d} \theta^3  \, dx\,.
\]

  Using the Gagliardo-Nirenberg and Young inequalities (see for instance \cite{Mazja85}), we have
  \begin{equation*}
    \norm{\theta}_{L^3}^3
      \leq C_1 \norm{\theta}_{L^2}^{3-\frac{d}{2}}\norm{\grad \theta}_{L^2}^{\frac{d}{2}}
      \leq \chi^{-1}\norm{\grad \theta}_{L^2}^2
	+C_2 \chi^{\frac{d}{4-d}}_{\vphantom{L^2}} \norm{\theta}_{L^2}^{\frac{12-2d}{4-d}}\,,
  \end{equation*}
  for some universal constants $C_1,C_2$.
  Using this in~\eqref{e:gi3} shows that~\ref{A2} holds with $\epsilon_0=\frac 12$ and 
  \begin{equation*}
    F(y) =
     \left( C_1 \chi^{\frac{4}{4 - d}}
	y^{\frac{4}{4 -d}}
      + \chi \bar \rho \right) y^2 \,.
  \end{equation*}

  For~\ref{A3}, note that the Hardy-Littlewood-Sobolev inequality implies
  \begin{align*}
    \chi^{-1} \norm{N(\theta)}_{L^2}
      &\leq
	\norm{\theta (\theta + \bar \rho)}_{L^2}
	+ \norm{\grad \theta \cdot \grad^{-1} \theta }_{L^2}
    \\
    &\leq
      \bar \rho \norm{\theta}_{L^2}
      + \norm{\theta}_{L^4}^2
      + \norm{\grad\theta}_{L^2}
	\norm{\grad^{-1}\theta}_{L^\infty}
    \\
    &\leq
      \bar \rho \norm{\theta}_{L^2}
      + C_3 \paren[\big]{
	\norm{\theta}_{\dot H^{d/4}}^2
	+ \norm{\grad\theta}_{L^2}
	  \norm{\theta}_{\dot H^{d/2 -1 + \epsilon}}
      }
    \\
    &\leq
      \bar \rho \norm{\theta}_{L^2}
      + C_4 \norm{\grad \theta}_{L^2}^2\,
  \end{align*}
 for any $\epsilon>0$ and constants $C_3,C_4$ depending only on $\epsilon$.
  (Above we also used for $\psi=\lap^{-1}\theta$ that for any $s > 0$ we have $\norm{D^2 \psi}_{\dot H^s} \leq C \norm{\lap\psi}_{\dot H^s}$, with some constant $C= C(s, d)$.)
Hence~\ref{A3} is satisfied with $C_0=C_4\chi $ (after fixing, e.g., $\epsilon=\frac 12$) and $G(y) = \chi \bar \rho y$.

  From the above formulae for $F$ and $G$ we can see that 
  \eqref{e:FGvanish} also holds.
  Thus Proposition~\ref{p:L2decay} applies, proving \eqref{e:ks1L2decay} as well as the last claim.
\end{proof}

Next,  we use the fact that a time-uniform bound on~$\norm{\rho_t}_{L^2}$ implies global existence for~\eqref{e:chemo1}.
This was shown in~\cite{KiselevXu16}, and we present here a short proof for the sake of completeness.

\begin{lemma}[Thm.\ 2.1 in~\cite{KiselevXu16}]
\label{l:L2cr}
  Let $d\in \set{2, 3}$ and $\rho_0 \in C^\infty(\T^d)$ be non-negative. 
  If the maximal time $T$ of existence of the unique sm\-ooth solution $\rho$ to~\eqref{e:chemo1} with  initial data $\rho_0$ and incompressible drift $u$ is finite, then
  \begin{equation}\label{13th}
    \int_0^T\norm{\rho_t -\bar{\rho}}_{L^2}^{\frac{4}{4-d}}\, dt = \infty \,.
  \end{equation}
\end{lemma}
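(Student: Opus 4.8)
The plan is to prove a blow-up criterion: if the maximal existence time $T$ of the smooth solution is finite, then $\int_0^T \|\rho_t - \bar\rho\|_{L^2}^{4/(4-d)}\,dt = \infty$. Equivalently, I want to show that if this integral were finite, the solution could be continued past $T$, contradicting maximality. The standard strategy is to bootstrap from an a priori bound controlled by $\|\rho_t - \bar\rho\|_{L^2}$ to control of a stronger norm (e.g. $\|\rho_t\|_{H^1}$ or $\|\rho_t\|_{L^p}$ for large $p$) that is known to preclude blow-up for the Keller-Segel equation.

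First I would recall that for the drift-free Keller-Segel system, local well-posedness in $C^\infty$ is standard, and that the smooth solution persists as long as one controls, say, $\|\rho_t\|_{L^p}$ for some $p > d/2$ (since $-\nabla^{-1}(\rho - \bar\rho)$ gains one derivative, the nonlinearity $\chi \nabla\cdot(\rho \nabla^{-1}(\rho-\bar\rho))$ behaves critically in $L^{d/2}$-type spaces). The drift term $u\cdot\nabla\rho$ is divergence-free, so it is harmless for all $L^p$ energy estimates — it simply integrates to zero against $|\rho|^{p-2}\rho$ or against powers of $\rho - \bar\rho$. Thus the drift does not affect the blow-up criterion at all, and I only need the argument for $u = 0$, which is already in the literature (this is why the statement cites Kiselev–Xu).

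The key step is an $L^2$ (or slightly higher) differential inequality. Multiplying \eqref{e:chemo1} by $\theta = \rho - \bar\rho$ and integrating, as done in the proof of Lemma~\ref{l:KSL2} above, gives
\begin{equation*}
  \frac12 \partial_t \|\theta_t\|_{L^2}^2 + \|\nabla\theta_t\|_{L^2}^2
  = \frac{\chi}{2}\|\theta_t\|_{L^3}^3 + \chi\bar\rho\|\theta_t\|_{L^2}^2 \,,
\end{equation*}
and then the Gagliardo–Nirenberg inequality $\|\theta\|_{L^3}^3 \le C\|\theta\|_{L^2}^{3-d/2}\|\nabla\theta\|_{L^2}^{d/2}$ followed by Young's inequality (exactly the estimate used to derive $F$ in Lemma~\ref{l:KSL2}, with $\epsilon_0 = \tfrac12$) absorbs the gradient term and leaves
\begin{equation*}
  \partial_t \|\theta_t\|_{L^2}^2 + \|\nabla\theta_t\|_{L^2}^2
  \le C\left( \|\theta_t\|_{L^2}^{\frac{12-2d}{4-d}} + \|\theta_t\|_{L^2}^2 \right).
\end{equation*}
For $d \in \{2,3\}$ one has $\tfrac{12-2d}{4-d} \ge 2$, so on any finite time interval the dominant exponent is $\tfrac{12-2d}{4-d}$; writing this as $\|\theta_t\|_{L^2}^2 \cdot \|\theta_t\|_{L^2}^{(12-2d)/(4-d) - 2} = \|\theta_t\|_{L^2}^2 \cdot \|\theta_t\|_{L^2}^{4/(4-d)}$, Grönwall's inequality gives
\begin{equation*}
  \|\theta_T\|_{L^2}^2 \le \|\theta_0\|_{L^2}^2 \exp\!\left( C \int_0^T \|\theta_t\|_{L^2}^{4/(4-d)}\,dt + C T \right).
\end{equation*}
Hence if $\int_0^T \|\theta_t\|_{L^2}^{4/(4-d)}\,dt < \infty$ and $T < \infty$, then $\sup_{t<T}\|\theta_t\|_{L^2} < \infty$; integrating the differential inequality also yields $\int_0^T\|\nabla\theta_t\|_{L^2}^2\,dt < \infty$.

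It remains to upgrade this $L^2$-in-time-$H^1$ plus $L^\infty$-in-time-$L^2$ control to the full higher regularity needed to continue the smooth solution past $T$ — this is the step I expect to be the main (if somewhat routine) obstacle. The contrapositive form of the standard local theory says: a smooth solution on $[0,T)$ extends smoothly past $T$ provided $\limsup_{t\to T}\|\rho_t\|_{L^p} < \infty$ for some $p > d/2$, or provided $\rho \in L^2_{\mathrm{loc}}([0,T], H^1)$ together with $\rho \in L^\infty_{\mathrm{loc}}([0,T], L^2)$ — because from this one runs a standard parabolic bootstrap: $L^2_tH^1_x \cap L^\infty_tL^2_x$ gives $\rho \in L^{q}_tL^{r}_x$ for a range of parabolic-admissible $(q,r)$ via interpolation, which feeds into the Duhamel formula for \eqref{e:chemo1} using the smoothing of the heat semigroup (the drift is handled by the fact that $\mathcal S_{s,t}$ also maps $L^p\to L^p$ with norm $\le 1$ and has the same $L^p\to L^q$ smoothing as the heat semigroup up to constants, since $u$ is divergence-free and Lipschitz), improving integrability step by step until one reaches $L^\infty_tL^\infty_x$ and then, by parabolic regularity, $C^\infty$. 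Alternatively, and more cleanly for a self-contained write-up, I would directly derive an $L^p$ estimate for large even $p$: multiply \eqref{e:chemo1} by $\theta^{p-1}$, use that the drift term vanishes, handle $\int \theta^{p-1}\nabla\cdot(\theta\nabla^{-1}\theta) = -(p-1)\int\theta^{p-1}\nabla\theta\cdot\nabla^{-1}\theta + \cdots$ by integration by parts to expose a term proportional to $\int\theta^{p+1}$, control $\int\theta^{p+1}$ by Gagliardo–Nirenberg against $\|\nabla\theta^{p/2}\|_{L^2}^2$ and $\|\theta\|_{L^2}$, absorb the gradient, and close the loop — the subcritical structure in $d\in\{2,3\}$ (where $\|\theta\|_{L^2}$-control beats the critical $L^{d/2}$ threshold once we already have the bound just proved) makes all these nonlinear terms controllable by a Grönwall argument in terms of $\sup_{t<T}\|\theta_t\|_{L^2}$ and $\int_0^T\|\nabla\theta_t\|_{L^2}^2\,dt$, both already finite. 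This yields $\sup_{t<T}\|\theta_t\|_{L^p} < \infty$, contradicting maximality of $T$. Since the excerpt explicitly says a short proof will be presented "for the sake of completeness," I would keep the write-up brief, citing \cite{KiselevXu16} for the parts that are purely the classical (drift-free) local theory.
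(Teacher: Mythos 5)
Your conclusion is correct, but you take a genuinely different route from the paper. The paper does not pass through a two-stage ``$L^2$ first, then bootstrap'' argument: it tests \eqref{e:chemo1} directly against $-\lap\rho$ and, after integrating by parts and applying Gagliardo--Nirenberg and Young to the resulting trilinear terms, arrives in a single step at
\[
\partial_t \norm{\grad\rho}_{L^2}^2 + \norm{\lap\rho}_{L^2}^2 \leq C\left(\norm{u}_{C^1} + \bar\rho + \norm{\rho-\bar\rho}_{L^2}^{\frac{4}{4-d}}\right)\norm{\grad\rho}_{L^2}^2,
\]
so the quantity in \eqref{13th} appears exactly as the Gr\"onwall coefficient; finiteness of the integral then bounds $\norm{\grad\rho}_{L^2}$ and $\norm{\lap\rho}_{L^2}$ uniformly on $[0,T)$, and a standard Galerkin continuation gives the contradiction. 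Your version first closes at the $L^2$ level (your Gr\"onwall computation there is correct, and the exponent bookkeeping $\frac{12-2d}{4-d}-2=\frac{4}{4-d}$ checks out), but the continuation criterion you then need is stronger than what you have established, so you must append a parabolic bootstrap or an $L^p$ energy estimate, which you only sketch. That deferred step is the real content of your argument; it does close in $d\in\{2,3\}$ because $L^2$ is subcritical there and your $L^p$ computation exposes $\int\rho^{p+1}$ in a form controllable by $\norm{\grad\rho^{p/2}}_{L^2}^2$ and the already-obtained $L^\infty_tL^2_x$ bound, so I would not call it a gap---but the paper's choice of the $H^1$ energy functional buys exactly the right Gr\"onwall weight and eliminates that extra layer entirely. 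A minor point in your favor: your $L^p$ estimates never see $\norm{u}_{C^1}$ (the drift integrates away against $\rho^{p-1}$), whereas the paper's $H^1$ estimate uses the Lipschitz bound on $u$, harmlessly, in its term $\mathit{II}$.
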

\begin{proof}
Multiplying~\eqref{e:chemo1} by $-\lap \rho$ and integrating in space yields
\begin{multline}\label{e:ksee1}
  \frac{1}{2}\partial_t\norm{\grad \rho}_{L^2}^2
    +\norm{\lap \rho}^2_{L^2}
    \leq \abs[\Big]{\int_{\mathbb{T}^d} \grad\rho \cdot \grad^{-1}(\rho-\bar{\rho}) \lap\rho \,dx } 
  \\
  + \abs[\Big]{\int_{\mathbb{T}^d}u\cdot \grad\rho \, \lap\rho \,dx}
  + \abs[\Big]{\int_{\mathbb{T}^d}\rho(\rho-\bar{\rho})\lap \rho \,dx}
  = \mathit{I} + \mathit{II} + \mathit{III}\,.
\end{multline}
To estimate $\mathit{I}$, we integrate by parts to get
\[
  \mathit{I} \leq
    \frac{1}{2}	\abs[\Big]{  \int_{\T^d}  \nabla \abs{\grad \rho}^2 \cdot \grad^{-1}(\rho-\bar{\rho})  \,dx  }
      + \sum_{i=1}^d
      \abs[\Big]{	  \int_{\mathbb{T}^d}(\grad\rho)\cdot (\grad^{-1}(\rho-\bar{\rho}))_{x_i} \rho_{x_i} \,dx } \,.
\]
Another integration by parts in the first term shows that it is bounded above by 
\[
\int_{\T^d} \abs{\grad \rho}^2 \abs{\rho - \bar \rho} \,dx \leq  \norm{\rho - \bar \rho}_{L^4} \norm{\grad \rho}_{L^{8/3}}^{2}\,
\]
while $L^4$-boundedness of double Riesz transforms shows that the second term is bounded above by $C \norm{\rho - \bar \rho}_{L^4} \norm{\grad \rho}_{L^{8/3}}^{2}$ for some constant $C$.  The Gagliardo-Nirenberg inequality applied to both norms in this product then yields 
\[
I\leq  C    \norm{\grad \rho}_{L^2}^{\frac d4}   \norm{\rho - \bar \rho}_{L^2}^{1 - \frac d4}    
 \norm{\lap \rho}_{L^2}^{\frac d4}  \norm{\grad \rho}_{L^2}^{2-\frac d4}       
 = C  \norm{\grad \rho}_{L^2}^{2}   \norm{\rho - \bar \rho}_{L^2}^{1 - \frac d4}    
 \norm{\lap \rho}_{L^2}^{\frac d4}  
\]
and therefore finally
\[
I\leq C       \norm{\grad \rho}_{L^2}^{2 - \frac d2}   \norm{\rho - \bar \rho}_{L^2}   \norm{\lap \rho}_{L^2}^{\frac d2} 
\leq 
\frac{1}{2} 
\norm{\lap \rho}_{L^2}^2
    + C \norm{\rho - \bar \rho}_{L^2}^{\frac 4{4-d}} \norm{\grad \rho}_{L^2}^2
\]
(with a new $C$ in each inequality).

For~$\mathit{II}$, we again integrate by parts and use $\grad\cdot u=0$ to get
\[
  \mathit{II} \leq 
    \left| \int_{\mathbb{T}^d}u\cdot \grad|\grad\rho|^2 \,dx \right| + C \norm{u}_{C^1}\norm{\grad \rho}^2_{L^2} 
    = C \norm{u}_{C^1}\norm{\grad \rho}^2_{L^2} \,.
\]

For~$\mathit{III}$, we integrate by parts and use the Gagliardo-Nirenberg inequality exactly as we did for term~$\mathit{I}$ to obtain
\begin{align*}
\mathit{III}
  &\leq 2\int_{\mathbb{T}^d}|\rho-\bar\rho| |\grad \rho|^2 \,dx + \bar{\rho} \int_{\mathbb{T}^d}|\grad \rho|^2 \,dx
  \\
  &\leq
    \frac{1}{2} 
  \norm{\lap \rho}_{L^2}^2
    + \left(C \norm{\rho - \bar \rho}_{L^2}^{\frac 4{4-d}}
      +  \bar \rho \right) \norm{\grad \rho}_{L^2}^2\,.
\end{align*}

Using the above bounds in~\eqref{e:ksee1} yields
\begin{equation}
  \partial_t \norm{\grad \rho}_{L^2}^2
    +\norm{\lap \rho}_{L^2}^2 
    \leq C \left( \norm{u}_{C^1} + \bar \rho
	    +  \norm{\rho - \bar \rho}_{L^2}^{\frac 4{4 - d}} \right)
	  \norm{\grad \rho}_{L^2}^2\,.
\end{equation}
Gronwall's lemma now shows that if~\eqref{13th} does not hold, then 
$\norm{\grad \rho}_{L^2}$ and $\norm{\lap \rho}_{L^2}$ remain uniformly bounded on $[0,T)$.
Then a standard Galerkin approximations  argument shows that  $\rho$ can be smoothly extended beyond $T$, a contradiction.
This completes the proof.
\end{proof}

We can now prove Theorem~\ref{t:KS}.

\begin{proof}[Proof of Theorem~\ref{t:KS}]
Let $\tau_0(y,\chi) \defeq \tau_1(y,y,\chi)$, with $\tau_1$ being from Lemma~\ref{l:KSL2} (so $\tau_0$ is also decreasing in both arguments).  
   If $T$ is the maximal time of existence of $\rho$, then~\eqref{e:ks1L2decay} holds (with $t_0 = 0$) for all $t\in [0,T)$.  Therefore we must have $T=\infty$ because otherwise~\eqref{13th} would not hold, contradicting Lemma~\ref{l:L2cr}.
  This also means that \eqref{4.2} follows from \eqref{e:ks1L2decay}.
The last claim follows from Theorem \ref{C.7.6}.
  \end{proof}

\section{Quenching of reactions by stirring}\label{s:rd}

A well established model  for the (normalized) temperature of a combusting substance (see, e.g., the reviews \cite{Berestycki02,Xin00,Xin09}) is the reaction diffusion equation
\begin{equation}\label{e:rd}
  \partial_t \theta = \lap \theta + f(\theta)\,.
\end{equation}
Here, the function $\theta$ takes values in $[0,1]$ and the non-linear Lipschitz {\it reaction function} $f$ is of the \emph{ignition type} (other types of reaction functions model other reactive processes, including chemical kinetics and population dynamics).  That is, we assume that there is $\alpha_0\in(0,1)$ such that
\begin{equation*}
  f(\theta) = 0 \quad\text{for } \theta \in [0, \alpha_0] \cup \set{1}
  \quad\text{and}\quad
  f(\theta) > 0 \quad\text{for } \theta \in (\alpha_0, 1)\,.
\end{equation*}
The number $\alpha_0$ is  {\it ignition temperature}, below which no burning occurs.  Also note that since $\theta\equiv 0,1$ are stationary solutions, comparison principle indeed yields $0\leq\theta\leq 1$ whenever $0\leq\theta_0\leq 1$.

When the combustive process is also subject to mixing due to a prescribed incompressible drift~$u$, equation~\eqref{e:rd} becomes
\begin{gather}\tag{\ref{e:rd}$'$}\label{e:rdDrift}
  \partial_t \theta + u \cdot \grad \theta = \lap \theta + f(\theta)\,.
\end{gather}
With or without the drift~$u$,  one can show using 
\[
  \partial_t \norm{\theta_t}_{L^1} = \int_{\mathbb T^d} f(\theta_t)\,dx \geq 0
\]
that the fuel burns completely (i.e., $\lim_{t\to\infty}\norm{1-\theta_t}_{L^\infty}=0$) for all initial data $0\leq\theta_0\leq 1$ that satisfy $\bar \theta_0 \defeq \int_{\T^d} \theta_0 \, dx \geq \alpha_0$ and $\theta_0\not\equiv\alpha_0$.
%
%
%
%

When instead $\bar \theta_0 < \alpha_0$, the fuel may or may not burn completely, and it is possible for the reaction to be {\it quenched}, that is, $\norm{\theta_T}_{L^\infty} \leq \alpha_0$ for some $T\geq 0$.
Comparison principle shows that after such time $T$,  evolution of the temperature will be governed by the linear equation~\eqref{e:ad}, and hence we will have $\lim_{t\to\infty}\norm{\theta_t-\bar\theta_0}_{L^\infty}=0$.

The main result of this section shows that if $\bar \theta_0 < \alpha_0$, then one can always ensure quenching by choosing an incompressible drift~$u$ with a small enough dissipation time.

\begin{theorem}\label{t:rd}
  Let $\theta$ be the solution to~\eqref{e:rdDrift} on $\T^d$ with nonnegative initial data~$\theta_0 \in L^\infty(\T^d)$ with $\bar \theta_0 < \alpha_0$.  There is $\tau_0=\tau_0(\alpha_0,\bar\theta_0)$ such that if $\tau_*(u) \leq\tau_0$ for some divergence-free vector field $u$, then  the reaction is quenched.
   In particular,  if $\bar \theta_0 < \alpha_0$ and $d\in\{2,3\}$, then there is a time-independent smooth cellular flow $u$ on $\T^d$ that quenches the reaction.
\end{theorem}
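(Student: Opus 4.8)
The plan is to reduce quenching to a smallness estimate on a linear solution operator, and then to absorb the nonlinear reaction as a controlled perturbation. First I would fix the drift $u$ and let $\theta$ solve \eqref{e:rdDrift}. Since $f\ge 0$ and $f(\theta)=0$ for $\theta\le\alpha_0$, the key quantity is the excess $w_t \defeq \theta_t - \bar\theta_0$, which is mean-zero and satisfies $\partial_t w + u\cdot\grad w = \lap w + f(\theta)$; equivalently, via the Duhamel formula with the advection--diffusion semigroup $\mathcal S_{s,t}$ of \eqref{e:ad},
\begin{equation*}
  w_t = \mathcal S_{0,t} w_0 + \int_0^t \mathcal S_{s,t} f(\theta_s)\,ds\,.
\end{equation*}
Because $0\le\theta\le 1$ and $f$ is Lipschitz, $\|f(\theta_s)\|_{L^\infty}$ is bounded by a constant $M=M(f)$, and the forcing term is only nonzero on the set where $\theta_s>\alpha_0$. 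The goal is to show that $\|w_t\|_{L^\infty}$ stays below $\alpha_0-\bar\theta_0$ for all $t$ up to some fixed $T$, and actually drops enough by time $T$ that $\theta_T\le\alpha_0$, which by the comparison-principle remark forces quenching.

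The central input, as the paper itself signals, is that $\|\mathcal S_{0,T}\|_{L^1_0\to L^\infty_0}$ can be made arbitrarily small by shrinking $\tau_*(u)$. The mechanism: write $T = k\tau_*(u) + r$ and split $\mathcal S_{0,T} = \mathcal S_{0,r}\,\mathcal S_{r,r+\tau_*}\cdots\mathcal S_{T-\tau_*,T}$; each of the $k$ factors contracts the $L^2_0$ norm by $\tfrac12$, while the hypercontractive smoothing of the heat part (which dominates over the measure-preserving advection since $u$ is divergence-free) gives $\|\mathcal S_{0,\eta}\|_{L^1_0\to L^2_0}\le C\eta^{-d/4}$ and $\|\mathcal S_{0,\eta}\|_{L^2_0\to L^\infty_0}\le C\eta^{-d/4}$ for any fixed $\eta$, uniformly in $u$ (these are standard Nash/Moser bounds for uniformly parabolic operators in divergence form, independent of the drift). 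Using the first factor and the last factor for the $L^1\to L^2$ and $L^2\to L^\infty$ smoothing on, say, intervals of length $\tau_*(u)$ (assuming $\tau_*(u)$ small forces $k$ large, so we have many $\tfrac12$-contractions to spare), we get
\begin{equation*}
  \|\mathcal S_{0,T}\|_{L^1_0\to L^\infty_0}
    \le C\,\tau_*(u)^{-d/2}\, 2^{-(k-2)}
    \le C\,\tau_*(u)^{-d/2}\,2^{-T/\tau_*(u)+2}\,,
\end{equation*}
which $\to 0$ as $\tau_*(u)\to 0$ for any fixed $T$. The same bound with $0$ replaced by $s$ gives $\|\mathcal S_{s,T}\|_{L^1_0\to L^\infty_0}\le \varepsilon(\tau_*(u))$ uniformly for $s\in[0,T]$ with $T-s\ge T/2$, say, plus the trivial bound $\|\mathcal S_{s,T}\|_{L^1_0\to L^\infty_0}$ controlled on short intervals by $C(T-s)^{-d/2}$, which is integrable in $s$ near $T$.

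With this in hand I would run a continuity/bootstrap argument on $[0,T]$. Let $\delta_0 \defeq \alpha_0 - \bar\theta_0 > 0$ and choose $T$ (depending only on $\alpha_0,\bar\theta_0$, and $M$, hence on $f$) large enough that the pure heat flow would already bring the excess mass below $\delta_0/2$ in $L^\infty$ after time $T$ — more precisely, so that $\int_0^T \|\mathcal S_{s,T}\|_{L^1_0\to L^\infty_0}\,M\,ds$ together with $\|\mathcal S_{0,T}w_0\|_{L^\infty}$ is small; note $\|w_0\|_{L^1}\le 2$, so $\|\mathcal S_{0,T}w_0\|_{L^\infty_0}\le 2\varepsilon(\tau_*(u))$. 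As long as $\sup_{t\le s}\|w_t\|_{L^\infty} < \delta_0$ the reaction term is supported where $\theta_s>\alpha_0$ but bounded by $M$, so the Duhamel formula gives, using the splitting of the $s$-integral into $[0,T/2]$ (where $\|\mathcal S_{s,t}\|_{L^1\to L^\infty}\le\varepsilon$) and $[T/2,t]$ (where it is $\le C(t-s)^{-d/2}$, integrable),
\begin{equation*}
  \|w_t\|_{L^\infty} \le 2\varepsilon(\tau_*(u)) + M T\,\varepsilon(\tau_*(u)) + C M \,T^{1-d/2}\,,
\end{equation*}
and here is where I want the structure of the estimate to be slightly sharper so the last, $\tau_*$-independent, term is also $<\delta_0$. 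The clean fix: choose $T$ first so that the heat-only contribution $CMT^{1-d/2}$ (for $d\le 3$, exponent $1-d/2\in[-1/2,1/2)$ — actually for $d=3$ this term does \emph{not} decay, so one cannot simply take $T$ large). Hence the argument must instead be: the term $\int_{T/2}^t\|\mathcal S_{s,t}\|_{L^1\to L^\infty}M\,ds$ should itself be bounded using that on $[T/2,T]$ we already have, inductively, $\|w_s\|_{L^\infty}$ small, so $f(\theta_s)$ is supported on a set of small measure; combining $\|f(\theta_s)\|_{L^1}\le M\,|\{\theta_s>\alpha_0\}|$ with smallness of that measure closes the loop.

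\textbf{The main obstacle} is precisely this last point: making the bootstrap self-consistent in $d=3$, where the heat-kernel contribution $\int_0^t(t-s)^{-d/2}\|f(\theta_s)\|_{L^1}ds$ does not become small just by taking $T$ large. The right way around it, which I would carry out, is to control $\|f(\theta_s)\|_{L^1}$ not by the crude $\|f\|_\infty\cdot|\mathbb T^d|$ but by exploiting that once $\|w_s\|_{L^\infty}<\delta_0$ one has $\theta_s<\alpha_0$ \emph{everywhere}, so $f(\theta_s)\equiv 0$ and the Duhamel integral vanishes identically from that time on — i.e. the argument is not a perturbative smallness argument at all but a clean two-step one: (i) on an initial layer $[0,T]$, show $\|\mathcal S_{0,t}w_0\|_{L^\infty}$ plus the contribution of $f$ (which while nonzero is bounded by $M$ in $L^\infty$ and supported in a set controlled by the as-yet-uncontrolled $\theta$) stays below $\delta_0$, and (ii) deduce $\theta_T\le\alpha_0$, after which \eqref{e:ad} governs and quenching holds. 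Step (i) is where one needs the genuinely new estimate: I would prove by a maximal-time argument that if $\tau_0=\tau_0(\alpha_0,\bar\theta_0)$ is small enough, the set $\{t: \|w_t\|_{L^\infty}\ge\delta_0\}$ is empty, using at the putative first such time the bound above with the reaction term estimated by $\|\mathcal S_{s,t}\|_{L^1_0\to L^\infty_0}\cdot M|\mathbb T^d|$ on the ``bad'' sub-interval near $t$ (short, hence its length — and thus the non-small part — can be taken $\le T^{1-d/2}$-free by instead bounding it by $M\int_{t-\sigma}^t(t-s)^{-d/2}ds = CM\sigma^{1-d/2}$ and choosing $\sigma$ small depending only on $M,\delta_0$, then $\varepsilon(\tau_0)$ small depending on $T,\sigma,M,\delta_0$). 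Once $\tau_0$ is pinned down this way — depending only on $\alpha_0$ and $\bar\theta_0$ through $\delta_0$ and the fixed $f$-dependent constants — the first assertion follows. The final sentence of the theorem is then immediate from Theorem~\ref{C.7.6}: in $d\in\{2,3\}$ there is a time-independent smooth cellular flow with $\tau_*(u)\le\tau_0$, and that flow quenches the reaction.
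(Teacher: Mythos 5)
Your linear ingredient is exactly the paper's Proposition~\ref{p:L1Linf}: interleaving the $u$-uniform hypercontractive bounds $\|\mathcal S_{s,s+\eta}\|_{L^1_0\to L^2_0},\,\|\mathcal S_{s,s+\eta}\|_{L^2_0\to L^\infty_0}\leq c_d\eta^{-d/4}$ with $\lfloor t/\tau_*(u)\rfloor$ halvings of the $L^2_0$ norm, so that $\|\mathcal S_{0,t}\|_{L^1_0\to L^\infty_0}\to 0$ as $\tau_*(u)\to 0$ for any fixed $t>0$; that part is correct and matches the paper. The nonlinear part, however, has a genuine gap. Your bootstrap is anchored on showing the set $\{t:\|w_t\|_{L^\infty}\geq\delta_0\}$ is empty, where $w_t=\theta_t-\bar\theta_0$ and $\delta_0=\alpha_0-\bar\theta_0$; but this set contains $t=0$ whenever $\|\theta_0-\bar\theta_0\|_{L^\infty}\geq\delta_0$, which is precisely the interesting case (if $\theta_0<\alpha_0$ everywhere the reaction never ignites and there is nothing to prove). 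So the maximal-time argument cannot start, and the observation that $\|w_s\|_{L^\infty}<\delta_0$ forces $f(\theta_s)\equiv 0$ is unavailable on the initial layer, which is exactly where the reaction acts. Two further concrete errors: $\int_{t-\sigma}^{t}(t-s)^{-d/2}\,ds$ diverges for $d\geq 2$ (your expression $C M\sigma^{1-d/2}$ blows up as $\sigma\to 0$ when $d=3$ and the integral is logarithmically divergent when $d=2$), so the near-diagonal piece of the Duhamel integral cannot be handled with the $L^1\to L^\infty$ kernel bound --- you would need the maximum-principle contraction $\|\mathcal S_{s,t}f(\theta_s)\|_{L^\infty}\leq\|f(\theta_s)\|_{L^\infty}$ there instead; and $w_t$ is not mean-zero for $t>0$ (the reaction strictly increases $\bar\theta_t$), so the $L^1_0\to L^\infty_0$ operator norm does not apply to it directly.

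The paper avoids all of this with one device you are missing. Since $f$ is Lipschitz with $f=0$ on $[0,\alpha_0]$, the quantity $\lambda\defeq\sup_{y\in(0,1]}f(y)/y$ is finite, and the comparison principle gives $\theta_t\leq e^{\lambda t}\varphi_t$, where $\varphi$ solves the purely linear equation~\eqref{e:ad} with data $\theta_0$ --- no Duhamel term, no bootstrap. One then fixes $t_0>0$ with $\alpha_0e^{-\lambda t_0}>\bar\theta_0$ (explicitly $t_0=-\lambda^{-1}\ln\frac{\alpha_0+\bar\theta_0}{2\alpha_0}$) and chooses $\tau_0$ so small that
\[
\|\varphi_{t_0}-\bar\theta_0\|_{L^\infty}\leq\|\mathcal S_{0,t_0}\|_{L^1_0\to L^\infty_0}\,\|\theta_0-\bar\theta_0\|_{L^1}\leq\alpha_0e^{-\lambda t_0}-\bar\theta_0\,,
\]
whence $\|\theta_{t_0}\|_{L^\infty}\leq e^{\lambda t_0}\|\varphi_{t_0}\|_{L^\infty}\leq\alpha_0$ and the reaction is quenched. (A Duhamel route could also be closed, but differently from what you propose: take the layer length $t_0$ \emph{small}, so that the crude bound $\int_0^{t_0}\|\mathcal S_{s,t_0}f(\theta_s)\|_{L^\infty}\,ds\leq t_0\sup f<\delta_0/2$ holds, and only then shrink $\tau_*(u)$ for that fixed $t_0$; as written, your argument does not close.) Your reduction of the last sentence to Theorem~\ref{C.7.6} is correct.
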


\begin{remark*}
 The last claim should also hold for $d\geq 4$, but we are not aware of a construction of such flows.  Time-periodic flows with this property can be constructed by alternating flows from Example \ref{E.7.7} below acting on different pairs of coordinates.
\end{remark*}

Theorem~\ref{t:rd} and its proof are closely related to Theorem 7.2 in \cite{ConstantinKiselevEA08}, which is a qualitative statement about a certain class of time-independent drifts (so-called relaxation-enhancing ones) on general compact manifolds.
Theorem~\ref{t:rd} is a more quantitative result that applies to general time-dependent drifts.  It is an immediate consequence of the comparison principle and the following result about equation~\eqref{e:ad}.


\begin{proposition}\label{p:L1Linf}
There is a  constant $C_d$ such that the solution operator  $\mathcal S_{s, t}$ for~\eqref{e:ad} with any
$u \in L^\infty( (0, \infty), W^{1,\infty}( \T^d) )$
satisfies
\begin{equation}\label{e:l1linf}
  \sup_{s \geq 0} \norm{\mathcal S_{s, s+t}}_{L^1_0 \to L^\infty_0} \leq \epsilon 
\end{equation}
for every $\epsilon>0$, provided
\begin{equation} \label{6.3}
t\geq  (C_d + d\ln^- \tau_*(u) + 2\ln^-\epsilon  ) \tau_*( u )\,.
\end{equation}
\end{proposition}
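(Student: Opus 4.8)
The plan is to prove Proposition~\ref{p:L1Linf} by combining the definition of dissipation time with the regularizing effect of the heat semigroup, using a duality argument to reduce the $L^1_0\to L^\infty_0$ estimate to an $L^2_0\to L^2_0$ estimate sandwiched between two short diffusion-dominated time intervals. First I would recall that $\|\mathcal S_{s,s+t}\|_{L^p_0\to L^p_0}\le 1$ for every $p\in[1,\infty]$ (by incompressibility of $u$ and the maximum principle / $L^p$ energy estimates), and that by the semigroup property $\mathcal S_{s,s+t}=\mathcal S_{s+t_2,s+t}\,\mathcal S_{s+t_1,s+t_2}\,\mathcal S_{s,s+t_1}$ for any $0\le t_1\le t_2\le t$. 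The idea is to split $t$ into three pieces: an initial piece of length $\eta$ used to go from $L^1_0$ into $L^2_0$, a middle piece of length $m\tau_*(u)$ (with $m\in\mathbb N$) used to contract in $L^2_0$ by a factor $2^{-m}$, and a final piece of length $\eta$ used to go from $L^2_0$ into $L^\infty_0$.

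The middle step is immediate from Definition~\ref{d:dissipationTime}: $\|\mathcal S_{s',s'+m\tau_*(u)}\|_{L^2_0\to L^2_0}\le 2^{-m}$ for every $s'\ge 0$. For the two outer steps I would use the comparison between $\mathcal S$ and the free heat flow. On the torus, the heat kernel bound gives $\|e^{\eta\Delta}\|_{L^1\to L^2}\le C_d\,\eta^{-d/4}$ for $0<\eta\le 1$ (and likewise $\|e^{\eta\Delta}\|_{L^2\to L^\infty}\le C_d\,\eta^{-d/4}$); the same kind of bound holds for $\mathcal S_{s,s+\eta}$ — one way is a Duhamel/Gronwall argument treating $u\cdot\nabla\varphi$ as a perturbation, but cleaner is the standard parabolic smoothing estimate $\|\mathcal S_{s,s+\eta}\|_{L^1_0\to L^2_0}\le C_d\,\eta^{-d/4}$, valid for $\eta\le 1$ with $C_d$ depending only on $d$ (this follows e.g. from the Nash-type inequality together with the energy identity, since the drift term drops out of $\tfrac12\partial_t\|\varphi\|_{L^2}^2=-\|\nabla\varphi\|_{L^2}^2$). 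By duality (the adjoint equation $-\partial_t\psi-u\cdot\nabla\psi=\Delta\psi$ has the same structure since $\operatorname{div}u=0$), $\|\mathcal S_{s,s+\eta}\|_{L^2_0\to L^\infty_0}\le C_d\,\eta^{-d/4}$ as well. Chaining the three estimates,
\begin{equation*}
  \|\mathcal S_{s,s+2\eta+m\tau_*(u)}\|_{L^1_0\to L^\infty_0}
   \le C_d^2\,\eta^{-d/2}\,2^{-m}.
\end{equation*}

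It remains to choose $\eta$ and $m$ so that the right-hand side is at most $\epsilon$ and the total time $2\eta+m\tau_*(u)$ is bounded by the expression in~\eqref{6.3}. I would take $\eta\defeq\min\{1,\tau_*(u)\}$, so $\eta^{-d/2}\le\tau_*(u)^{-d/2}$ and hence $\ln(C_d^2\eta^{-d/2})\le 2\ln C_d+\tfrac d2\ln^-\tau_*(u)$; then choosing $m\defeq\lceil \log_2(C_d^2\eta^{-d/2}\epsilon^{-1})\rceil$ makes $C_d^2\eta^{-d/2}2^{-m}\le\epsilon$, and $m\le 1+(\ln 2)^{-1}(2\ln C_d+\tfrac d2\ln^-\tau_*(u)+\ln^-\epsilon)$. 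Since $2\eta\le 2\tau_*(u)$ and $m\tau_*(u)$ is bounded by the above times $\tau_*(u)$, the total time is at most $(C_d+d\ln^-\tau_*(u)+2\ln^-\epsilon)\tau_*(u)$ after absorbing the numerical constants into $C_d$ and using the monotonicity remark after Definition~\ref{d:dissipationTime} (so larger $t$ only helps). The main obstacle I anticipate is making the short-time smoothing estimate $\|\mathcal S_{s,s+\eta}\|_{L^1_0\to L^2_0}\le C_d\eta^{-d/4}$ genuinely uniform in $s$ and in the (time-dependent, merely $L^\infty_tW^{1,\infty}_x$) drift $u$ — the clean route is the Nash inequality argument, which only uses $\tfrac12\partial_t\|\varphi\|_{L^2}^2=-\|\nabla\varphi\|_{L^2}^2$ and therefore is completely insensitive to $u$; the $W^{1,\infty}$ regularity of $u$ is needed only to make sense of $\mathcal S_{s,t}$ and the mean-zero preservation, not for the bound itself.
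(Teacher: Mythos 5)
Your proposal is correct and follows essentially the same route as the paper: the paper likewise decomposes the time interval as $\sigma + n\tau_*(u) + \sigma$ with $\sigma = \min\{\tau_*(u),1\}$, uses the drift-independent smoothing bounds $\norm{\mathcal S_{s,s+\sigma}}_{L^1_0\to L^2_0},\ \norm{\mathcal S_{s,s+\sigma}}_{L^2_0\to L^\infty_0}\leq c_d\,\sigma^{-d/4}$ (quoting Lemma~5.4 of \cite{Zlatos10} and a duality argument rather than re-deriving them from Nash's inequality), and then takes $n=\lceil\log_2(c_d^2\sigma^{-d/2}\epsilon^{-1})\rceil$ exactly as you do. The only cosmetic difference is which of the two smoothing estimates is taken as primitive and which is obtained by duality.
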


\begin{proof}
  There is a $u$-independent constant $c_d\geq 1$ such that if $\varphi$ is the solution to~\eqref{e:ad} with mean-zero initial data $\varphi_0 \in L^1(\T^d)$, then
   \beq\lb{6.4}
    \norm{\varphi_{s + t}}_{L^\infty} \leq c_d t^{-d/4} \norm{\varphi_s}_{L^2}
    \eeq
and
   \beq\lb{6.5}
    \norm{\varphi_{s + t}}_{L^2} \leq c_d t^{-d/4} \norm{\varphi_s}_{L^1}
    \eeq
 for any $s\geq 0$ and $t\in[0,1]$.  The first claim is contained in Lemma~5.4 in~\cite{Zlatos10} (see also Lemma~5.6 in \cite{ConstantinKiselevEA08} and Lemmas 3.1, 3.3 in~\cite{FannjiangKiselevEA06}), while the second follows from it and a simple duality argument (it is also contained in Lemma~5.4 in~\cite{Zlatos10} but with $\frac d2$ in place of $\frac d4$, which would also suffice here).   Notice also that while \cite{Zlatos10} only considers time-independent drifts, the proof of its Lemma~5.4 equally applies to the time-dependent case.
 

  Write $\tau_* = \tau_*(u)$ and let $\sigma = \min\{\tau_*,1\}$.  Then for any $s\geq 0$,  $n \in\mathbb N$, and $t\geq n \tau_*+2\sigma$  we have
  \begin{align*}
    \norm{\varphi_{s+t}}_{L^\infty}
      &\leq \norm{\varphi_{s+n \tau_*+2\sigma}}_{L^\infty}
      \leq c_d \sigma^{-d/4} \norm{\varphi_{s+n\tau_* +\sigma }}_{L^2}
      \\
      &\leq c_d \sigma^{-d/4} 2^{-n}\norm{\varphi_{s+\sigma} }_{L^2}
      \leq c_d^2 \sigma^{-d/2} 2^{-n} \norm{\varphi_s }_{L^1}\,.
  \end{align*}
 The result now follows from  this estimate with
  \begin{equation*}
    n =   \ceil[\Big]{ \log_2 (c_d^2\sigma^{-d/2}\epsilon^{-1}) }
  \end{equation*}
and from $\frac 1{\ln 2}\leq 2$, with $C_d\defeq 2+4\ln c_d$.
\end{proof}

We can now prove Theorem~\ref{t:rd}.

\begin{proof}[Proof of Theorem~\ref{t:rd}]
  Notice that 
  \begin{equation*}
    \lambda
      \defeq \sup_{y \in (0, 1]}  \frac{f(y)}{y}
  \end{equation*}
  is finite, and 
  if $\varphi$ solves~\eqref{e:ad}
on $\mathbb T^d$  with 
initial data $\phi_0\defeq \theta_0$, then
  the comparison principle shows that $\theta_t \leq e^{\lambda t} \varphi_t$ for all $t\geq 0$.
  Let $t_0\defeq -\frac 1\lambda \ln \frac{\alpha_0+\bar\theta_0}{2\alpha_0}>0$ and
  \begin{equation*}
    \epsilon \defeq \alpha_0 e^{-\lambda t_0} - \bar \theta_0 =  \frac{\alpha_0  - \bar \theta_0}2>0\,.
  \end{equation*}
  If now $\tau_0>0$ is such that for any $\tau_*(u)\leq \tau_0$ we have that the right-hand side of \eqref{6.3} is at most $t_0$, then
Proposition~\ref{p:L1Linf} shows that
  \begin{equation*}
    \norm{\varphi_{t_0} - \bar \theta_0}_{L^\infty}
      \leq \epsilon  \norm{\theta_0 - \bar \theta_0}_{L^1}
      \leq \epsilon \leq \alpha_0 e^{-\lambda t_0} - \bar \theta_0\,.
  \end{equation*}
  Therefore $\norm{\theta_{t_0}}_{L^\infty} \leq\norm{\varphi_{t_0}}_{L^\infty} e^{\lambda t_0} \leq \alpha_0 $,  completing the proof of the first claim.  The last claim follows from Theorem \ref{C.7.6}.
\end{proof}

\section{Dissipation times of periodic and cellular flows}\label{s:dtimeBounds}

In this section we will prove Theorem \ref{C.7.6} and also provide examples of cellular flows in 2D and 3D satisfying its hypotheses.

Consider now the advection-diffusion equation
\eqref{e:ad}
on $\R^d$, with initial data $\tht_0$ and a time-independent mean-zero divergence-free Lipschitz vector field $u$.
Consider also the stochastic process $X_t^{x}=X_t^x(\omega)$ satisfying the SDE
\begin{equation} \label{7.2}
dX^{x}_t=\sqrt{2}\,dB_t - u(X^{x}_t)dt, \qquad X^{x}_0=x.
\end{equation}
Here $B_t=B_t(\omega)$ is a normalized Brownian motion on $\R^d$ with $B_0=0$, defined on some probability space $(\Omega,\calB_\infty,\bbP_\Omega)$.  Lemma 7.8 in \cite{Oksendal03} shows that if $k_t(x,y)$ is the fundamental solution for \eqref{e:ad} (i.e., $k_t(x,\cdot)$ is the density for $X_t^x$) and $\bbE_\Omega$ the expectation with respect to $\omega\in\Omega$, then solutions to \eqref{e:ad} satisfy
\begin{equation} \label{7.3}
\tht(t,x) = \int_{\R^d} k_t(x,y) \tht_0(y) \,dy = \bbE_{\Omega} \big( \tht_0(X^{x}_t) \big).
\end{equation}
Finally, for each vector $e\in\R^d$, the {\it effective diffusivity} of $u$ in direction $e$ is the number
\begin{equation} \label{7.4}
\diffe(u) \defeq \lim_{t\to\infty} \bbE_\Om \left(\frac {\left|(X_t^x-x)\cdot e\right|^2}{2t} \right) \qquad (\geq 1),
\end{equation}
with the limit being independent of $x\in\R^d$.  It will be also convenient to denote  
\[
D(u) \defeq \min\{D_{e_1}(u),\dots,D_{e_d}(u)\}\geq 1
\]
 the minimum of effective diffusivities in all the coordinate directions. We refer the reader to the discussion in Sections 1 and 2 of \cite{Zlatos11} for more details.  
 
It follows from \eqref{7.4} that the stochastic process travels far (relative to $\sqrt t$, for large $t$) with large probability when the effective diffusivity is large, which may of course aid mixing when such flows are scaled down and acting on a torus.  In fact, cellular flows in two dimensions have their effective diffusivities growing proportionally to the square root of their amplitudes \cite{FannjiangPapanicolaou94,Koralov04}, so this suggests that large amplitude cellular flows (with small cells) should be good short time mixers on $\T^2$.  To show that, we need to use the following result from \cite{Zlatos11}, which is a short time large probability one-sided analogue of \eqref{7.4} for 1-periodic flows, with $u$-independent constants.

\begin{lemma}[Theorem 2.1 in \cite{Zlatos11}] \label{L.7.1}
There is $C\geq 1$ such that  for any 1-periodic incompressible mean-zero Lipschitz flow $u$ on $\R^d$, any $e\in\R^d$, any  $\alpha>0$, and any $\tau\geq 1$,  there are $t\in [1,\tau+1]$ and $x\in\R^d$ such that 
\begin{equation} \label{7.5}
\bbP_\Om \left(\left|(X_{t}^x-x)\cdot e\right| \geq\alpha \sqrt{\tau\diffe(u)} \right) \geq 1-C\alpha.
\end{equation}
\end{lemma}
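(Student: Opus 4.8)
Since the right-hand side of~\eqref{7.5} is negative unless $\alpha$ lies below a fixed threshold, and since $\diffe(u)$ and the displacement in direction~$e$ are respectively quadratic and linear under $e\mapsto\lambda e$, I may assume $|e|=1$ and $\alpha$ small.  The plan is a corrector/martingale argument; the only genuine difficulty is upgrading an exact second-moment identity to a bound that holds with probability $1-O(\alpha)$, with constants independent of~$u$.

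First I would introduce the cell corrector $\chi=\chi_e\in H^1(\T^d)$, the mean-zero solution of $\lap\chi-u\cdot\grad\chi=u\cdot e$ on $\T^d$, which exists by the Lax--Milgram theorem (using $\int_{\T^d}u\cdot e\,dx=0$) and is $C^2$ by elliptic regularity since $u$ is Lipschitz.  Testing this equation against $\chi$ and using $\grad\cdot u=0$ gives $\int_{\T^d}|\grad\chi|^2\,dx=\diffe(u)-1$, hence by the Poincar\'e inequality on $\T^d$ also $\int_{\T^d}\chi^2\,dx\le C(\diffe(u)-1)$ with a universal~$C$; this is where the corrector gets controlled \emph{in $L^2$} (though not in $L^\infty$) by $\diffe(u)$.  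Since $\lap h-u\cdot\grad h=0$ for $h(y)\defeq y\cdot e+\chi(y)$, It\^o's formula shows that for each fixed~$x$ the process $M^x_t\defeq(X^x_t-x)\cdot e+\chi(X^x_t)-\chi(x)$ is a continuous martingale with $\langle M^x\rangle_t=2\int_0^t|e+\grad\chi(X^x_s)|^2\,ds$.

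Next I would average over the starting point~$x$ against Lebesgue measure on a fundamental cell, which is invariant for~\eqref{7.2} because $u$ is divergence-free.  Using $\int_{\T^d}\bbE_\Om[f(X^x_s)]\,dx=\int_{\T^d}f\,dx$ for every $1$-periodic~$f$, taking expectations (so $\bbE_\Om[(M^x_t)^2]=\bbE_\Om[\langle M^x\rangle_t]$), and integrating in~$x$, gives the clean identity
\beq
  \int_{\T^d}\bbE_\Om\big[(M^x_t)^2\big]\,dx=2t\int_{\T^d}|e+\grad\chi|^2\,dx=2t\,\diffe(u)\qquad(t\ge0),
\eeq
which together with the $L^2$-bound on~$\chi$ also gives $\int_{\T^d}\bbE_\Om[((X^x_t-x)\cdot e)^2]\,dx\ge t\,\diffe(u)-C\,\diffe(u)$.

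The remaining step --- converting this into a lower bound valid with probability $1-O(\alpha)$ at a single, flow-dependent but $\omega$-independent, pair $(t,x)$ with $t\in[1,\tau+1]$ --- is the heart of the matter, and I expect it to be the main obstacle.  The mechanism I would use is the Dambis--Dubins--Schwarz representation $M^x_t=\beta^x_{\langle M^x\rangle_t}$ for an auxiliary Brownian motion $\beta^x$: at the deterministic time $t=\tau+1$, on the event that $\langle M^x\rangle_t$ lies in a band $[c\,\tau\,\diffe(u),\,C\,\tau\,\diffe(u)]$ (which, by~(i) below, has probability $1-O(\alpha)$ for a suitable~$x$), the value $M^x_t=\beta^x_{\langle M^x\rangle_t}$ is a Brownian value at scale $\sqrt{\tau\,\diffe(u)}$, so $\bbP_\Om(|M^x_t|<\alpha\sqrt{\tau\,\diffe(u)})\le C\alpha$ by the standard first-passage estimate for one-dimensional Brownian motion over an interval of times.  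To carry this out with $u$-independent constants one must control, for some~$x$ produced by pigeonholing over the cell after the averaging above: (i) that the additive functional $\langle M^x\rangle_{\tau+1}=2\int_0^{\tau+1}|e+\grad\chi(X^x_s)|^2\,ds$ stays in such a band around its mean $\sim 2\tau\,\diffe(u)$ with probability $1-O(\alpha)$, for which the natural tool is the \emph{universal} decay $\norm{e^{t(\lap-u\cdot\grad)}}_{L^2_0\to L^2_0}\le e^{-c_0 t}$ (valid because the drift is skew-adjoint on $L^2_0(\T^d)$, so the Poincar\'e constant alone governs the rate); and (ii) that the correction $\chi(X^x_{\tau+1})-\chi(x)$ relating $M^x_{\tau+1}$ to the true displacement $(X^x_{\tau+1}-x)\cdot e$ is of smaller order than $\sqrt{\tau\,\diffe(u)}$ off an event of probability $O(\alpha)$, again using only the $L^2$ control of~$\chi$.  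The delicate point in (i)--(ii), and what I view as the crux, is that $\chi$ is \emph{not} controlled in $L^\infty$ and the integrand $|e+\grad\chi|^2$ can be extremely peaked (for large-amplitude cellular flows it concentrates in thin boundary layers near the cell separatrices), so a crude second-moment estimate on these functionals is not $u$-uniform; a more careful analysis --- precisely that of Theorem~2.1 in~\cite{Zlatos11} --- is required, and its argument is the one I would follow.
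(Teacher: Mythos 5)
You should first note that the paper does not prove Lemma~\ref{L.7.1} at all: it is imported from Theorem~2.1 of \cite{Zlatos11}, the only in-paper content being the remark that the original time window $[0,\tau]$ must be shifted to $[1,\tau+1]$ (a modification your proposal does not address). So there is no in-paper argument to compare yours against; the relevant comparison is with the cited reference, to which your own sketch explicitly defers for ``the heart of the matter.'' Your setup --- the corrector $\chi$ solving the cell problem, the identity $\int_{\T^d}\abs{e+\grad\chi}^2\,dx=\diffe(u)$, the martingale $M^x_t$ with quadratic variation $2\int_0^t\abs{e+\grad\chi(X^x_s)}^2\,ds$, and averaging over $x$ using invariance of Lebesgue measure --- is the standard and correct starting point, and you correctly diagnose why a crude argument is not $u$-uniform ($\chi$ is controlled only in $L^2$, not $L^\infty$, by $\diffe(u)$).

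However, the one concrete mechanism you propose for the decisive step does not work as stated, so the proposal has a genuine gap beyond its acknowledged incompleteness. You fix the deterministic time $t=\tau+1$ and argue via Dambis--Dubins--Schwarz that, on the event $\langle M^x\rangle_{\tau+1}\in[c\,\tau\diffe(u),C\,\tau\diffe(u)]$, the value $M^x_{\tau+1}=\beta^x_{\langle M^x\rangle_{\tau+1}}$ is anticoncentrated at scale $\sqrt{\tau\diffe(u)}$. But $\langle M^x\rangle_{\tau+1}$ is a \emph{random} time (indeed a stopping time for the filtration of $\beta^x$, and one correlated with $\beta^x$), and the value of a Brownian motion at a stopping time confined to a band need not be anticoncentrated at all: the first return to $0$ after time $c\,\tau\diffe(u)$ occurs within the band with probability bounded below, and lands exactly at $0$. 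Hence no ``first-passage estimate'' yields $\bbP_\Om\bigl(\abs{M^x_{\tau+1}}<\alpha\sqrt{\tau\diffe(u)}\bigr)\leq C\alpha$ from band membership alone. This is precisely why the lemma asserts only the existence of \emph{some} $t\in[1,\tau+1]$ (depending on $u$, $e$, $\alpha$, $\tau$) rather than a bound at $t=\tau+1$: one must average or pigeonhole over $t$ as well as over $x$, and the quantitative, $u$-uniform execution of that step is exactly the content of Theorem~2.1 in \cite{Zlatos11}. As written, your proposal reproduces the well-known preliminaries and then points back to the reference for the part that constitutes the proof.
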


\begin{remark*}
 The result in \cite{Zlatos11} has $t\in [0,\tau]$, but the proof can be easily modified to obtain this version.
\end{remark*}

Because we want to consider general periods, we now extend this result to that case.

\begin{lemma} \label{L.7.2}
There is $C\geq 1$ such that  for any $l$-periodic incompressible mean-zero Lipschitz flow $u$, any $e\in\R^d$, any  $\alpha>0$, and any $\tau\geq l^2$, there are $t\in [l^2,\tau+l^2]$ and $x\in\R^d$ such that \eqref{7.5} holds.
\end{lemma}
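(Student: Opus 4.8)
The plan is to reduce the $l$-periodic case to the $1$-periodic case of Lemma~\ref{L.7.1} by the natural parabolic rescaling. Given an $l$-periodic incompressible mean-zero Lipschitz flow $u$, define the $1$-periodic flow $\tilde u(y) \defeq l\, u(ly)$, which is again incompressible, mean-zero, and Lipschitz. First I would check how the associated stochastic processes transform: if $X_t^x$ solves \eqref{7.2} with drift $u$, then a short computation using the scaling of Brownian motion ($B_{l^2 s}$ has the same law as $l\tilde B_s$ for a normalized BM $\tilde B$) shows that $\tilde X_s^{y} \defeq l^{-1} X_{l^2 s}^{ly}$ solves the SDE \eqref{7.2} with drift $\tilde u$ and initial point $y$. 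Consequently, from the definition \eqref{7.4}, the effective diffusivities satisfy $D_e(\tilde u) = D_e(u)$ for every $e$ — the ratio $|(X_t^x-x)\cdot e|^2/(2t)$ is scale-invariant under this parabolic rescaling, so effective diffusivity is unchanged.

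Next I would apply Lemma~\ref{L.7.1} to $\tilde u$ with the same $e$, the same $\alpha$, and with $\tilde\tau \defeq \tau/l^2$ (note $\tilde\tau \geq 1$ precisely because $\tau \geq l^2$). This yields $\tilde t \in [1, \tilde\tau + 1]$ and $\tilde x \in \R^d$ such that
\begin{equation*}
\bbP_\Om\left(\left|(\tilde X_{\tilde t}^{\tilde x} - \tilde x)\cdot e\right| \geq \alpha\sqrt{\tilde\tau\, D_e(\tilde u)}\right) \geq 1 - C\alpha\,.
\end{equation*}
Now I translate back: set $t \defeq l^2 \tilde t$ and $x \defeq l\tilde x$. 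Then $t \in [l^2, \tau + l^2]$, and since $X_t^x - x = l(\tilde X_{\tilde t}^{\tilde x} - \tilde x)$ in law, the event $\{|(X_t^x - x)\cdot e| \geq l\alpha\sqrt{\tilde\tau\, D_e(\tilde u)}\}$ has probability at least $1 - C\alpha$. Finally, using $D_e(\tilde u) = D_e(u)$ and $l\sqrt{\tilde\tau} = l\sqrt{\tau/l^2} = \sqrt{\tau}$, the threshold $l\alpha\sqrt{\tilde\tau\, D_e(\tilde u)}$ equals exactly $\alpha\sqrt{\tau\, D_e(u)}$, which is precisely \eqref{7.5}. The constant $C$ is the same one from Lemma~\ref{L.7.1}, so it is indeed $u$-independent as claimed.

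The main (minor) obstacle is bookkeeping the rescaling carefully — verifying that $\tilde X_s^y \defeq l^{-1} X_{l^2 s}^{ly}$ genuinely satisfies the rescaled SDE with the correct normalization of the Brownian motion, and that the probability statement transfers without loss because the rescaling is a deterministic bijection on paths (so it maps the relevant event onto the corresponding event with matching probability). One should also note that the limit in \eqref{7.4} defining $D_e(\tilde u)$ exists and is $x$-independent for $\tilde u$ by the same reference (\cite{Zlatos11}), since $\tilde u$ is itself a periodic incompressible Lipschitz flow; this justifies writing $D_e(\tilde u) = D_e(u)$ as an honest equality of well-defined quantities. No genuinely hard analysis is needed — this is purely a change-of-variables argument — so the proof should be quite short.
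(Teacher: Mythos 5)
Your proof is correct and is essentially identical to the paper's own argument: both rescale to the $1$-periodic flow $v(x)=l\,u(lx)$, identify $\tfrac1l X^{lx}_{l^2 t}$ as the process for $v$ via Brownian scaling, note $D_e(v)=D_e(u)$, and apply Lemma~\ref{L.7.1} with $\tilde\tau=\tau/l^2$. Your write-up just makes the bookkeeping of $t$, $x$, and the threshold $\alpha\sqrt{\tau D_e(u)}$ more explicit than the paper does.
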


\begin{proof}
Let $v(x) \defeq l\, u(l x)$, let $X_t^x$ be from \eqref{7.2}, and let $Y_t^x \defeq \frac 1l X^{lx}_{l^2t}$. Then $Y_t^x$ satisfies 
\[
dY^{x}_t=\sqrt{2}\,\frac 1ldB_{l^2t} - v(Y^{x}_t)dt, \qquad Y^{x}_0=x.
\]
Since $\frac 1lB_{l^2t}$ equals $B_t$ in law, it follows that $Y_t^x$ is a stochastic process corresponding to the 1-periodic flow $v$ via \eqref{7.2}.  
From \eqref{7.4} we immediately see that $D_e(v)=D_e(u)$ for all $e\in\mathbb R^d$, and Lemma \ref{L.7.1} applied to $v$ then finishes the proof.
\end{proof}

Next we extend the claim to all $x$, in an appropriate sense.  Let $\Psi(x) \defeq \int_x^{\infty} \frac 1{\sqrt{2\pi}}e^{-y^2/2}dy$.

\begin{theorem} \label{T.7.3}
There is $C\geq 1$ such that  for any $l$-periodic incompressible mean-zero Lipschitz flow $u$ on $\R^d$, any $e\in\bbS^d$, any  $\alpha\in(0,1)$, and any $\tau\geq l^2$, there is $t\in [l^2,\tau+l^2]$ such that for any $x\in\R^d$ we have
\[
\bbP_\Om \left(\left|(X_{t}^x-x)\cdot e\right| \geq \alpha \sqrt{\tau\diffe(u)} - 3 l \Psi^{-1}(\alpha)  - 2l^2\|u\|_{L^\infty} \right) \geq 1-C\sqrt{\alpha}.
\]
  Also,  $D_e(u)=D_e(u^L)$ for any $L>0$, where $u^L(x) \defeq \frac 1Lu(\frac xL)$.
\end{theorem}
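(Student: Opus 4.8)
The statement has two independent parts. The scaling identity $\diffe(u)=\diffe(u^L)$ is routine: if $\hat X^x$ denotes the solution of~\eqref{7.2} with $u$ replaced by $u^L$, then $\hat X^x_t\defeq L\,X^{x/L}_{t/L^2}$ solves that same SDE, by the change of variables already used in the proof of Lemma~\ref{L.7.2} (using that $\tfrac1{\sqrt L}B_{L^2t}$ is a Brownian motion and $u^L(Ly)=\tfrac1L u(y)$); hence $\bbE_\Om\big( |(\hat X^x_t-x)\cdot e|^2/2t\big)=\bbE_\Om\big( |(X^{x/L}_{t/L^2}-x/L)\cdot e|^2/2(t/L^2)\big)$, and letting $t\to\infty$ gives the claim.

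For the ``for all $x$'' upgrade of Lemma~\ref{L.7.2} I would use two facts. The first is a \emph{cell-averaged} version of Lemma~\ref{L.7.2}, which its proof (via Theorem~2.1 of~\cite{Zlatos11}) in fact provides: for a suitable $r\in[l^2,\tau]$,
\[
  \frac1{l^d}\int_{[0,l]^d}\bbP_\Om\!\left( |(X^w_r-w)\cdot e|<\alpha\sqrt{\tau\diffe(u)}\right)dw\ \le\ C\alpha\,,
\]
together with the observation that $w\mapsto\bbP_\Om(|(X^w_r-w)\cdot e|<M)$ is $l$-periodic: since $u$ is $l$-periodic, $X^{w+lk}-lk$ solves the same SDE as $X^w$, so $X^{w+lk}_r-(w+lk)$ has the law of $X^w_r-w$. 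The second is the ultracontractivity bound $\|k^{(l)}_{l^2}(x,\cdot)\|_{L^\infty}\le C_d l^{-d}$, where $k^{(l)}$ is the fundamental solution of~\eqref{e:ad} on the torus $\R^d/l\Z^d$; this follows by scaling from the drift-independent $L^1\to L^\infty$ smoothing estimate for~\eqref{e:ad} (cf.~\eqref{6.4}--\eqref{6.5}), and $k^{(l)}_{l^2}(x,\cdot)$ is precisely the periodization onto $\R^d/l\Z^d$ of the density of $X^x_{l^2}$.

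Given these, set $t\defeq r+l^2\in[l^2,\tau+l^2]$, fix $x\in\R^d$, and write $W\defeq X^x_{l^2}$. By the time-homogeneous Markov property, conditionally on $W$ the increment $X^x_t-W$ has the law of $X^W_r-W$. From~\eqref{7.2} one has $|(W-x)\cdot e|\le|\sqrt2\,B_{l^2}\cdot e|+l^2\|u\|_{L^\infty}$, so with $A\defeq 3l\Psi^{-1}(\alpha)+l^2\|u\|_{L^\infty}$ an elementary Gaussian estimate gives $\bbP_\Om(|(W-x)\cdot e|>A)\le\alpha$ (for $\alpha$ below an absolute constant; for larger $\alpha$ the inequality being proved is vacuous once $C$ is large). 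Set $m\defeq\alpha\sqrt{\tau\diffe(u)}-3l\Psi^{-1}(\alpha)-2l^2\|u\|_{L^\infty}$ and $g(w)\defeq\bbP_\Om(|(X^w_r-w)\cdot e|<\alpha\sqrt{\tau\diffe(u)})$. On $\{|(W-x)\cdot e|\le A\}$, the inequality $|(X^x_t-x)\cdot e|<m$ forces $|(X^x_t-W)\cdot e|<m+A=\alpha\sqrt{\tau\diffe(u)}-l^2\|u\|_{L^\infty}\le\alpha\sqrt{\tau\diffe(u)}$, whence
\[
  \bbP_\Om\!\left(|(X^x_t-x)\cdot e|<m\right)\ \le\ \alpha+\bbE_\Om\big[g(W)\big]\,.
\]
Since $g$ is $l$-periodic,
\[
  \bbE_\Om\big[g(W)\big]=\int_{[0,l]^d}k^{(l)}_{l^2}(x,w)\,g(w)\,dw\ \le\ \|k^{(l)}_{l^2}(x,\cdot)\|_{L^\infty}\int_{[0,l]^d}g\ \le\ C_d\,C\alpha\,,
\]
by the two facts above (using $\int_{[0,l]^d}g\le C\alpha\,l^d$). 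Hence $\bbP_\Om(|(X^x_t-x)\cdot e|<m)\le(1+C_dC)\alpha\le C'\sqrt\alpha$, which is the assertion (indeed with $\alpha$ in place of $\sqrt\alpha$, if the averaged bound holds with $C\alpha$ on the right).

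The one point requiring care is the cell-averaged version of Lemma~\ref{L.7.2}: one must extract it from the proof of Theorem~2.1 in~\cite{Zlatos11}, rather than the weaker ``for some $w$'' form recorded above, and check that the second-moment and anti-concentration estimates there pass to an arbitrary period $l$ with the stated dependence on $\alpha$. The remaining steps — the scaling identity, the periodicity reduction, and dominating the periodized transition density of $X^x_{l^2}$ by $C_dl^{-d}$ via the universal ultracontractivity bound — are standard.
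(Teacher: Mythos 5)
Your scaling identity and the second half of your argument (the Markov step at time $l^2$, the Gaussian tail estimate absorbing $3l\Psi^{-1}(\alpha)+l^2\|u\|_{L^\infty}$, the periodicity of $g$, and the upper bound $C_dl^{-d}$ on the periodized transition density) are all sound and close in spirit to the paper's. The genuine gap is exactly the point you flag at the end: everything rests on the cell-averaged bound $\frac 1{l^d}\int_{[0,l]^d}\bbP_\Om\left(|(X^w_r-w)\cdot e|<\alpha\sqrt{\tau\diffe(u)}\right)dw\leq C\alpha$, which is strictly stronger than what Lemma~\ref{L.7.2} (and the cited Theorem~2.1 of~\cite{Zlatos11}) actually states, and you do not prove it --- you only assert it should be extractable from the proof in the reference. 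As written the argument therefore does not close within the framework of the paper's lemmas, and the fact that the authors settle for $1-C\sqrt\alpha$ rather than the $1-C\alpha$ your computation would deliver is a strong hint that this averaged form is not simply available off the shelf. The paper's proof deliberately avoids needing any averaged input: it takes the single good starting point $x$ produced by Lemma~\ref{L.7.2}, and uses the \emph{lower} bound $h_{l^2}(x,\cdot)\geq c\,l^{-d}$ on the periodized density at time $l^2$, via a Chebyshev-type argument, to conclude that the set of $y$ for which the displacement probability falls below $1-3c^{-1}\sqrt{C\alpha}$ has Lebesgue measure at most $\sqrt{C\alpha}\,l^d$; only then does it run the step you describe from an arbitrary $z$ with the upper kernel bound. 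The two-sided (in particular the lower) heat-kernel bound is the missing idea in your write-up, and it is also the source of the $\sqrt\alpha$ in the statement.

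The repair is short and fits your framework: from the ``for some $x$'' form of Lemma~\ref{L.7.2} together with $h_{l^2}\geq c\,l^{-d}$ one first deduces $\int_{[0,l]^d}g(w)\,dw\leq C'\sqrt\alpha\,l^d$ (with the threshold in $g$ already reduced by one increment $la+l^2\|u\|_{L^\infty}$ to account for this extra time-$l^2$ step; two such increments with $a=\sqrt2\,\Psi^{-1}(\alpha)$ total at most $3l\Psi^{-1}(\alpha)+2l^2\|u\|_{L^\infty}$, matching the statement), and then your periodization and upper-kernel step gives the conclusion with $1-C\sqrt\alpha$. With that insertion your proof becomes essentially the paper's, merely organized around the periodized function $g$ rather than around the measure of the bad set of starting points.
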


\begin{remark*}
Note that for any 1-periodic flow $u$ we now have 
\[
\lim_{L\to 0} \left( 3 L \Psi^{-1}(C\alpha)  - 2L^2\|u^L\|_{L^\infty} \right)= 0,
\]
with $u^L$ being $L$-periodic.  So if $u$ has a large effective diffusivity, $X_t^x$ will travel far for  all $t$ as small as one needs and all $x\in\R^d$, provided we scale the flow down sufficiently (and multiply by the same scaling factor).  This will result in good mixing properties of such flows on $\T^d$.
\end{remark*}

\begin{proof}
The last claim was established in the previous proof.

It is well known that there is $c>0$ such that for any $l$-periodic flow $u$, the probability density function $h_t(x,y) \defeq \sum_{k\in(l\mathbb Z)^d} k_t(x,y+k)$  of the process $X_{t}^x\,{\rm mod}\, l \in (l\T)^d$ takes values in $[cl^{-d},c^{-1}l^{-d}]$ when $t=l^2$ (for each $x\in\T^d$).  
Given $\tau\geq l^2$, take $(t,x)$ from Lemma \ref{L.7.2}.  Then for any $\alpha>0$ and $C$ from Lemma \ref{L.7.2} we have
\begin{align*}
& 1- C\alpha -  \bbP_\Omega \left(|\sqrt 2 B_1\cdot e|> a \right) 
\\  &\leq \int_{(l\T)^d} h_{l^2}(x,y) \bbP_\Om \left(\left|(X_{t-l^2}^y-y)\cdot e\right| \geq \alpha \sqrt{\tau\diffe(u)} - l a - l^2\|u\|_{L^\infty} \right) dy.
\end{align*}
This is due to the Markov property of $X_t^x$ as well as the fact that if $|\sqrt 2B_{l^2}\cdot e|\leq la$ (which has the same probability as $|\sqrt 2B_1\cdot e|\leq a$), then $|X_{l^2}^x-x|\leq la + l^2 \|u\|_{L^\infty}$. 
Let us now pick $a \defeq \sqrt 2\Psi^{-1}(\alpha)$, so we have $\bbP_\Omega(|\sqrt 2B_1\cdot e|> a)\leq 2C\alpha$. Using $h_{l^2}\geq cl^{-d}$ and $\int_{(l\T)^d} h_{l^2}(x,y)dy=1$, we find that the measure of the set of all $y\in(l\T)^d$ such that
\[
\bbP_\Om \left(\left|(X_{t-l^2}^y-y)\cdot e\right| \geq \alpha \sqrt{\tau\diffe(u)} - l a - l^2\|u\|_{L^\infty} \right)< 1-3c^{-1}\sqrt{C\alpha} 
\]
is at most $\sqrt{C\alpha} \,l^d$.

Let now $z\in\R^d$ be arbitrary.  Then Markov property again yields 
\begin{align*}
  \MoveEqLeft
\bbP_\Om \left(\left|(X_{t}^z-z)\cdot e\right| \geq \alpha \sqrt{\tau\diffe(u)} - 2l a - 2l^2\|u\|_{L^\infty} \right)
  \\
  &\qquad + \bbP_\Omega \left(|\sqrt 2 B_1\cdot e|> a \right) 
  \\
  & \geq \int_{(l\T)^d} h_{l^2}(z,y) \cdot
  \\
    &\qquad\qquad\cdot \bbP_\Om \left(\left|(X_{t-l^2}^y-y)\cdot e\right| \geq \alpha \sqrt{\tau\diffe(u)} - l a - l^2\|u\|_{L^\infty} \right) \, dy
  \\
  & \geq 1-3c^{-1}\sqrt{C\alpha}  - c^{-1}l^{-d}\sqrt{C\alpha} \,l^d \,.
\end{align*}
So $C\alpha\leq 1\leq c^{-1}$ shows for the above $t$ and all $z\in\R^d$,
\[
\bbP_\Om \left(\left|(X_{t}^z-z)\cdot e\right| \geq \alpha \sqrt{\tau\diffe(u)} - 2l a - 2l^2\|u\|_{L^\infty} \right) \geq 1-5c^{-1}\sqrt{C\alpha}.
\]
If we now change $C$ to be $5\max\{C,c^{-2}\}$, the result follows.
\end{proof}

From now on we will consider flows on $\T^d$ with period $\frac 1{\nu}$ ($\nu\in\mathbb N$) that are also symmetric.  We say that a flow $u$ is symmetric in $x_n$ if we have
\begin{equation} \label{7.6}
u(R_nx)=R_nu(x) \qquad \text{for all  $x\in\T^d$},
\end{equation}
where $R_nv \defeq v-2v_ne_n$ for $v=\sum_{n=1}^d v_ne_n\in\R^d$ is the reflection in the $n^{\rm th}$ coordinate.  
Note that a periodic flow that is symmetric in all $d$ coordinates has a cellular structure.


We let $X_t^x$ be the process above, when $u$ is considered on $\R^d$ (extended periodically),  and note that $X_{t}^x\,{\rm mod}\, 1$ is the corresponding process on $\T^d$.
  Finally, for any divisor $\mu$ of $\nu$, we denote by $\calC^\mu_{k} \defeq \Pi_{n=1}^d[\frac {k_n}{\mu},\frac {k_{n}+1}{\mu})$ ($k=(k_1,\dots,k_d)\in\{0,\dots,\mu-1\}^d$) the ``cells'' of $u$ on $\T^d$ of size $\frac 1\mu$ (each of which is an invariant set for the flow when $u$ is symmetric).

\begin{lemma} \label{L.7.4}
There is $C\geq 1$ such that  for any $\frac 1{\nu}$-periodic incompressible mean-zero Lipschitz flow $u$ that is symmetric in $x_n$  we have
\begin{equation}\label{7.7}
\left| \bbP_\Om \left(X_\tau^x\in \calC^\mu_{k}\right) - \bbP_\Om \left(X_\tau^x\in \calC^\mu_{m}\right) \right| \leq C\sqrt{\alpha}
\end{equation}
for any $x\in\R^d$, any $\alpha\in(0,1)$, any divisor $\mu$ of $\nu$, and any $k,m$ as above such that $k-m$ is a multiple of $e_n$, provided
\begin{equation}\label{7.8}
\tau \geq   \frac{(6 \nu \Psi^{-1}(\alpha)  + 4\|u\|_{L^\infty} + \nu^2)^2}{4\nu^4 \alpha^2 D_{e_n}(u)}  + \frac 2{\nu^2}.
\end{equation}
In particular, if $u$ is symmetric in all coordinates, then we have 
\begin{equation}\label{7.9}
\left| \bbP_\Om \left(X_\tau^x\in \calC^\mu_{k}\right) - \frac 1{\mu^d} \right| \leq Cd\sqrt{\alpha}
\end{equation}
for any $x\in\R^d$, any $\alpha,\mu$ as above, and any $k,m\in\{0,\dots,\mu-1\}^d$, provided
\begin{equation}\label{7.10}
\tau \geq   \frac{(6 \nu \Psi^{-1}(\alpha)  + 4 \|u\|_{L^\infty} + \nu^2)^2}{4\nu^4 \alpha^2 D(u)}  + \frac 2{\nu^2}.
\end{equation}
\end{lemma}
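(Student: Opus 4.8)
The plan is to prove \eqref{7.7} for a single coordinate $n$ first, and then obtain \eqref{7.9} from it by a short telescoping argument. Fix $x\in\R^d$, a divisor $\mu$ of $\nu$, and $k,m$ as in the statement, and set
\[
g(y)\defeq \mathbf 1_{\calC^\mu_{k}}(y\bmod 1)-\mathbf 1_{\calC^\mu_{m}}(y\bmod 1),
\]
so that the left-hand side of \eqref{7.7} equals $\abs{\bbE_\Om[g(X^x_\tau)]}$ and $\abs{g}\le 1$. The first key point is that $g$ is \emph{odd} under reflection of the $n$-th coordinate about any plane $\{x_n=c\}$ with $2c\in\tfrac1\mu(k_n+m_n+1)+\Z$: such a reflection fixes every coordinate but the $n$-th and, modulo $1$, interchanges the slabs $[\tfrac{k_n}\mu,\tfrac{k_n+1}\mu)$ and $[\tfrac{m_n}\mu,\tfrac{m_n+1}\mu)$ in that coordinate. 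Because $\mu\mid\nu$, every such $c$ is an integer multiple of $\tfrac1{2\nu}$, so by $\tfrac1\nu$-periodicity and symmetry \eqref{7.6} the plane $\{x_n=c\}$ is an axis of symmetry of $u$; hence if $X_t$ solves \eqref{7.2} then its reflection about $\{x_n=c\}$ solves \eqref{7.2} again, driven by the reflected Brownian motion (again a Brownian motion) and started at the reflected point. In particular, if $w$ lies on one of these planes then for every $s>0$ we get $\bbE_\Om[g(X^w_s)]=\bbE_\Om[g(\text{reflection of }X^w_s)]=-\bbE_\Om[g(X^w_s)]$, so $\bbE_\Om[g(X^w_s)]=0$ (here we use that $X^w_s$ has a density for $s>0$).

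Now let $G\defeq\{y\in\R^d:\ 2y_n\in\tfrac1\mu(k_n+m_n+1)+\Z\}$ be the union of all these symmetry planes — their $n$-th coordinates form a grid of spacing exactly $\tfrac12$ — and let $T$ be the first hitting time of $G$ by the process $X^x$. Then $T$ is a stopping time and $X^x_T\in G$ on $\{T<\infty\}$, so the strong Markov property and time-homogeneity of \eqref{7.2} give, on $\{T<\tau\}$, that $\bbE_\Om[g(X^x_\tau)\mid\mathcal F_T]$ equals the map $w\mapsto\bbE_\Om[g(X^w_{\tau-T})]$ evaluated at $w=X^x_T$, which vanishes by the previous paragraph. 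Since $\bbP_\Om(T=\tau)=0$, this yields $\bbE_\Om[g(X^x_\tau)\mathbf 1_{T\le\tau}]=0$ and hence
\[
\abs{\bbE_\Om[g(X^x_\tau)]}=\abs{\bbE_\Om[g(X^x_\tau)\mathbf 1_{T>\tau}]}\le \bbP_\Om(T>\tau).
\]
It remains to bound $\bbP_\Om(T>\tau)$, and this is where Theorem~\ref{T.7.3} enters: apply it with $l=\tfrac1\nu$, direction $e=e_n$, the same $\alpha$, and free parameter $\tau-\tfrac1{\nu^2}$ (which is $\ge\tfrac1{\nu^2}=l^2$ since \eqref{7.8} forces $\tau\ge\tfrac2{\nu^2}$). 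This produces a time $t\in[\tfrac1{\nu^2},\tau]$ with
\[
\bbP_\Om\Big(\abs{(X^x_t-x)\cdot e_n}\ge \alpha\sqrt{(\tau-\tfrac1{\nu^2})D_{e_n}(u)}-\tfrac3\nu\Psi^{-1}(\alpha)-\tfrac2{\nu^2}\|u\|_{L^\infty}\Big)\ge 1-C\sqrt\alpha,
\]
and a direct computation shows that \eqref{7.8} is precisely what makes the displacement threshold on the left at least $\tfrac12$. Since consecutive planes of $G$ have $n$-th coordinates $\tfrac12$ apart, the event $\{\abs{(X^x_t-x)\cdot e_n}\ge\tfrac12\}$ forces the continuous path $s\mapsto(X^x_s)_n$ to cross $G$ by time $t\le\tau$, i.e.\ $T\le t$; hence $\bbP_\Om(T>\tau)\le\bbP_\Om(T>t)\le C\sqrt\alpha$, which proves \eqref{7.7} with the constant of Theorem~\ref{T.7.3}.

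Finally, \eqref{7.9}--\eqref{7.10} follow. If $u$ is symmetric in all coordinates then \eqref{7.7} is available in every direction $e_j$, and since $D(u)\le D_{e_j}(u)$ the hypothesis \eqref{7.10} implies \eqref{7.8} for each $j$. Connecting arbitrary $k,m\in\{0,\dots,\mu-1\}^d$ by a path in this set that changes one coordinate at a time (at most $d$ steps) and summing the corresponding bounds \eqref{7.7} gives $\abs{\bbP_\Om(X^x_\tau\in\calC^\mu_{k})-\bbP_\Om(X^x_\tau\in\calC^\mu_{m})}\le Cd\sqrt\alpha$; averaging over all $m$ and using $\sum_m\bbP_\Om(X^x_\tau\in\calC^\mu_{m})=1$ then gives \eqref{7.9}. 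The only genuine idea is the pairing of the odd test function $g$ with the stopping time $T$ so that the conditional expectation vanishes identically on $\{T<\tau\}$; I do not expect a serious obstacle beyond this, the most delicate remaining bookkeeping being the verification that the constants in \eqref{7.8} deliver a displacement of at least $\tfrac12$ and that such a displacement forces a crossing of $G$.
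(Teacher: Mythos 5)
Your proof is correct and follows essentially the same route as the paper's: Theorem~\ref{T.7.3} with the choice of parameters forced by \eqref{7.8} guarantees a displacement of at least $\tfrac12$ in the $e_n$-direction with probability $1-C\sqrt\alpha$, hence a crossing of one of the symmetry hyperplanes, after which the reflection symmetry of $u$ combined with the strong Markov property equates the two cell probabilities up to the $C\sqrt\alpha$ error; the passage to \eqref{7.9} by iterating over coordinates and summing the probabilities to $1$ is also the paper's argument. Your formulation via the odd test function $g$ and the vanishing of $\bbE_\Om[g(X^x_\tau)\mathbf 1_{T\le\tau}]$ is just a slightly more explicit (and arguably cleaner)写法 of the paper's statement that $\bbP_\Om(\omega\in\Om'\text{ and }X^x_\tau\in\calC^\mu_k)=\bbP_\Om(\omega\in\Om'\text{ and }X^x_\tau\in\calC^\mu_m)$.
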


\begin{proof}
The second claim follows from the first applied successively in all $d$ coordinates, and from $\sum_m\bbP_\Om \left(X_\tau^x\in \calC^\mu_{m}\right)=1$.  

As for the first claim, notice that \eqref{7.7} is implies $\tau\geq \frac 2{\nu^2}$ and
\[
\alpha \sqrt{(\tau -\nu^{-2} )D_{e_j}(u)}  - 3 \nu^{-1} \Psi^{-1}(\alpha)  - 2\nu^{-2}\|u\|_{L^\infty} \geq \frac 12.  
\]
Theorem \ref{T.7.3} shows that there is $s\in [\frac 1{\nu^2},\tau]$ such that 
\[
\bbP_\Om \left(\left|(X_{s}^x-x)_n\right| \geq \frac 12 \right) \geq 1-C\sqrt{\alpha},
\]
when $X_t^x$ is considered on $\R^d$.
This means that there is $\Om'\subseteq\Om$ with $\bbP_\Om(\Om')\geq 1-C\sqrt{\alpha}$ such that for each $\omega\in\Om'$, the process $X_{t}^x(\omega)\,{\rm mod}\, 1$ hits at least one of the two hyperplanes on $\T^d$ with $x_n \in\{ \frac{k_n+m_n+1}{2\mu}, \frac{k_n+m_n+1}{2\mu}+\frac 12\}$ by time $s$. 

 But symmetry in $x_n$ and $\frac1\nu$-periodicity show that $u$ is symmetric across any hyperplane with $x_n=\frac j{2\nu}$ ($j=0,\dots,2\nu-1$), that is,
\[
u \left(R_n\left(x-\frac j{2\nu}e_n\right)+\frac j{2\nu}e_n\right) = u \left(R_nx+\frac j{\nu}e_n\right) = u(R_nx) = R_nu(x)
\]
for all $x\in\T^d$.  Since also $\calC^\mu_k$ and $\calC^\mu_m$ are mapped into each other by reflections across the hyperplanes with $x_n \in\{ \frac{k_n+m_n+1}{2\mu}, \frac{k_n+m_n+1}{2\mu}+\frac 12\}$, the law of $B_t$ is invariant under the reflection $R_n$, and $X_t^x$  satisfies the strong Markov property, it follows that
\[
\bbP_\Om(\omega\in\Om'  \text{ and } X_\tau^x(\omega)\in \calC^\mu_k) = \bbP_\Om(\omega\in\Om'  \text{ and } X_\tau^x(\omega)\in \calC^\mu_m)
\]
because $\tau\geq s$.  This finishes the proof.
\end{proof}

If $h_t$ is as above and $h'_t$ is the corresponding probability density kernel for the flow $-u$ on $\T^d$, then we have $h_t(x,y)=h'_t(y,x)$ for all $x,y\in\T^d$ because $\nabla\cdot u\equiv 0$ means that the adjoint of the operator $e^{t(\lap-u\cdot\nabla)}$ (whose kernel is $h_t$) is $e^{t(\lap+u\cdot\nabla)}$ (whose kernel is $h'_t$).  Therefore, the solution to \eqref{e:ad} on $\T^d$  satisfies
\begin{align*}
\int_{\calC^\mu_{k}} \tht(t,x)dx & = \int_{\calC^\mu_{k}}\int_{\T^d} h_t(x,y)\tht_0(y) dydx 
\\ &= \int_{\T^d} \int_{\calC^\mu_{k}} h'_t(y,x) dx \, \tht_0(y) dy 
\\ &=  \int_{\T^d} \bbP_\Om \left(X_\tau^y\in \calC^\mu_{k}\right) \, \tht_0(y) dy,
\end{align*}
with $X_t^y$ the process corresponding to $-u$.
If \eqref{7.9} holds and $\tht_0$ is mean-zero, then we obtain
\[
\left| \int_{\calC^\mu_{k}} \tht(\tau,x)dx \right| \leq Cd\sqrt\alpha \|\tht_0\|_{L^1}
\]
for each $k$.  Poincar\'e inequality now shows that
\[
 \int_{\calC^\mu_{k}} |\nabla \tht(\tau,x)|^2dx  
 \geq C_d^{-2}\mu^2\inf_{|a|\leq C_d\sqrt\alpha \mu^d \|\tht_0\|_{L^1}}  \int_{\calC^\mu_{k}} |\tht(\tau,x)-a|^2dx 
\]
for each $k$ and some $C_d\geq 1$ (only dependent on $d$).  But then
\begin{align*}
\mu^{-1} \| \nabla \tht_\tau \|_{L^2} 
&\geq C_d^{-1}\left\| \left( |\tht_\tau|-C_d\sqrt\alpha \mu^d \|\tht_0\|_{L^1} \right)^+ \right\|_{L^2} 
\\ &\geq C_d^{-1} \|\tht_\tau\|_{L^2} - \sqrt\alpha\mu^d \|\tht_0\|_{L^1},
\end{align*}
which now implies that
\begin{equation}\label{7.11}
\frac d{d\tau} \|\tht_\tau\|_{L^2}^2 = -2\|\nabla\tht_\tau\|^2_{L^2} \leq -  C_d^{-1} \mu \|\tht_\tau\|_{L^2}
\end{equation}
as long as $ \|\tht_\tau\|_{L^2} \geq 2C_d \sqrt\alpha\mu^d \|\tht_0\|_{L^2}$ (because $\|\tht_0\|_{L^2}\geq \|\tht_0\|_{L^1}$).  From this and Lemma \ref{L.7.4} we obtain the following result.

\begin{theorem} \label{T.7.5}
For any $d\in\mathbb N$, there is $C_d\geq 1$ such that  for any $\frac 1{\nu}$-periodic incompressible mean-zero Lipschitz flow $u$ on $\T^d$ that is symmetric in all coordinates and any mean-zero $\tht_0\in L^2(\T^d)$, the solution to \eqref{e:ad} on $\T^d$ satisfies
\begin{equation}\label{7.12}
\|\tht_t\|_{L^2(\T^d)} \leq 2C_d \sqrt\alpha\mu^d \|\tht_0\|_{L^2(\T^d)}
\end{equation}
 for each $\alpha\in(0,1)$ and each divisor $\mu$ of $\nu$, provided
\begin{equation}\label{7.13}
t \geq   \frac{(6 \nu \Psi^{-1}(\alpha)  + 4 \|u\|_{L^\infty} + \nu^2)^2}{4\nu^4 \alpha^2 D(-u)}  + \frac 2{\nu^2} + \frac{2C_d}\mu \ln^+ \frac {1} {2 C_d\sqrt\alpha\mu^d}
\end{equation}
\end{theorem}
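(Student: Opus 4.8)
The statement is, in effect, the content of the computation displayed just before it, organized as a theorem; I would carry it out in four steps. The strategy is to feed the equidistribution estimate of Lemma~\ref{L.7.4} applied to the time\emph{-reversed} flow $-u$ into a duality identity for the kernels, thereby controlling the cell\emph{-}averages of the solution $\tht$, then upgrade this to a gradient lower bound via the Poincar\'e inequality on cells of side $\mu^{-1}$, and finally integrate the resulting energy inequality.

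First, observe that $-u$ is again a $\frac1\nu$-periodic, incompressible, mean\emph{-}zero, Lipschitz vector field that is symmetric in all coordinates, with $\|-u\|_{L^\infty}=\|u\|_{L^\infty}$. Hence Lemma~\ref{L.7.4} applied to $-u$ gives $C\geq1$ such that $\abs{\bbP_\Om(X_\tau^x\in\calC^\mu_k)-\mu^{-d}}\leq Cd\sqrt\alpha$ for all $x$ and all $k$, whenever $\tau\geq\tau_0\defeq\frac{(6\nu\Psi^{-1}(\alpha)+4\|u\|_{L^\infty}+\nu^2)^2}{4\nu^4\alpha^2 D(-u)}+\frac2{\nu^2}$, where $X_t^x$ now denotes the diffusion associated with $-u$; note $\tau_0$ is precisely the first two terms on the right side of~\eqref{7.13}. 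Next, since $\grad\cdot u\equiv0$, the adjoint of $e^{t(\lap-u\cdot\grad)}$ is $e^{t(\lap+u\cdot\grad)}$, so the transition kernels satisfy $h_t(x,y)=h'_t(y,x)$ with $h'$ the kernel of the $-u$-flow; as displayed before the statement this yields $\int_{\calC^\mu_k}\tht(t,x)\,dx=\int_{\T^d}\bbP_\Om(X_t^y\in\calC^\mu_k)\,\tht_0(y)\,dy$, and combined with $\int_{\T^d}\tht_0\,dy=0$ it gives $\abs[\big]{\int_{\calC^\mu_k}\tht(\tau,x)\,dx}\leq Cd\sqrt\alpha\,\norm{\tht_0}_{L^1}$ for every $\tau\geq\tau_0$ and every $k$.

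Now on each cell $\calC^\mu_k$, a cube of side $\mu^{-1}$, the Poincar\'e inequality bounds $\norm{\grad\tht_\tau}_{L^2(\calC^\mu_k)}^2$ below by $c\mu^2$ times the squared $L^2$-distance of $\tht_\tau$ to its cell-average $a_k$, and by the previous step $\abs{a_k}=\mu^d\abs[\big]{\int_{\calC^\mu_k}\tht_\tau}\leq Cd\sqrt\alpha\mu^d\norm{\tht_0}_{L^1}\leq Cd\sqrt\alpha\mu^d\norm{\tht_0}_{L^2}$. Summing over $k$ and using the triangle inequality for the $L^2$-norm yields $\norm{\grad\tht_\tau}_{L^2}\geq c\mu\paren{\norm{\tht_\tau}_{L^2}-Cd\sqrt\alpha\mu^d\norm{\tht_0}_{L^2}}$. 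Fixing a $d$-dependent constant $C_d\geq1$ large enough (larger than the reciprocal Poincar\'e constant and than $Cd$), it follows that whenever $\tau\geq\tau_0$ and $\norm{\tht_\tau}_{L^2}\geq 2C_d\sqrt\alpha\mu^d\norm{\tht_0}_{L^2}$ the subtracted term is at most $\tfrac12\norm{\tht_\tau}_{L^2}$, so $\norm{\grad\tht_\tau}_{L^2}\geq (2C_d)^{-1}\mu\norm{\tht_\tau}_{L^2}$; together with the energy identity $\frac d{d\tau}\norm{\tht_\tau}_{L^2}^2=-2\norm{\grad\tht_\tau}_{L^2}^2$ this forces $\norm{\tht_\tau}_{L^2}$ to decay exponentially in $\tau$ at a rate that is at least a $d$-dependent multiple of $\mu^2$, hence of $\mu$. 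Since $\norm{\tht_t}_{L^2}$ is non-increasing we have $\norm{\tht_{\tau_0}}_{L^2}\leq\norm{\tht_0}_{L^2}$, so integrating this decay on $[\tau_0,t]$ shows that $\norm{\tht_t}_{L^2}$ reaches the threshold $2C_d\sqrt\alpha\mu^d\norm{\tht_0}_{L^2}$ once $t-\tau_0$ exceeds a quantity that, after enlarging $C_d$ if necessary and using $\mu\geq1$, is at most $\frac{2C_d}\mu\ln^+\frac1{2C_d\sqrt\alpha\mu^d}$; from that time on monotonicity keeps $\norm{\tht_t}_{L^2}$ below the threshold. Comparing with~\eqref{7.13} gives~\eqref{7.12}.

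The main point requiring care is not analytic but organizational: one must fix a single $C_d\geq1$ that simultaneously dominates the cell-Poincar\'e constant, the constant $Cd$ coming from Lemma~\ref{L.7.4}, and the coefficient of the logarithmic time term in~\eqref{7.13}, so that the threshold $2C_d\sqrt\alpha\mu^d\norm{\tht_0}_{L^2}$ appearing in the conclusion~\eqref{7.12} is the same one used to run the decay argument. The kernel duality $h_t(x,y)=h'_t(y,x)$ and the observation that $-u$ inherits every hypothesis of Lemma~\ref{L.7.4} are the remaining, routine, points to check.
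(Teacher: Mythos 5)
Your proposal is correct and follows essentially the same route as the paper: Lemma~\ref{L.7.4} applied to $-u$, the kernel duality $h_t(x,y)=h_t'(y,x)$ to control cell averages of $\tht_\tau$, the Poincar\'e inequality on cells of side $\mu^{-1}$ to get the gradient lower bound, and integration of the resulting differential inequality (together with monotonicity of $\norm{\tht_t}_{L^2}$) over a time interval of length $\frac{2C_d}{\mu}\ln^+\frac{1}{2C_d\sqrt\alpha\mu^d}$. This is exactly the computation the paper carries out in the paragraphs preceding the theorem, with the formal proof reduced to integrating \eqref{7.11}.
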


\begin{proof}
This follows from \eqref{7.11} for all times $\tau$ satisfying \eqref{7.10} such that $ \|\tht_\tau\|_{L^2} \geq 2C_d \sqrt\alpha\mu^d \|\tht_0\|_{L^1}$, noting also that $ \|\tht_\tau\|_{L^2}\leq \|\tht_0\|_{L^2}$.
\end{proof}

We can now prove Theorem~\ref{C.7.6}.

\begin{proof}
Let $\alpha_n \defeq D(u_n)^{-1/4}$ and $\mu_n \defeq \lfloor \alpha_n^{-1/4d} \rfloor$.  Using these values for $\alpha,\mu$ in Theorem \ref{T.7.5}, as well as $u(x)=-\nu_n u_n(\nu_n x)$ with a sufficiently large $\nu=\nu_n$ (a multiple of $\mu_n$), together with the last claim in Theorem \ref{T.7.3}, yields that the sum of the first two fractions in \eqref{7.13} is less than $D(u_n)^{-1/2}$, while $2C_d \sqrt\alpha\mu^d$ in \eqref{7.12} is bounded above by $2C_d D(u_n)^{-1/16}$ and the last term in \eqref{7.13} is bounded above by $\frac C{\mu_n} \ln 2\mu_n$ for some $C\geq 1$ and all  $n$.  Since all three bounds converge to 0 as $n\to\infty$, the proof of the first claim is finished.

 The last claim follows from this and Examples \ref{E.7.7} and \ref{E.7.8} below.
\end{proof}

\begin{remark}\label{r:tauDSize}
  We can in fact pick $\nu_n$ to be the smallest multiple of $\mu_n=\lfloor D(u_n)^{1/16d} \rfloor$ greater than $\norm{u_n}_{L^\infty} D(u_n)^{(1-8d)/32d}$ and still have the right-hand side of \eqref{7.13} bounded above by $\frac C{\mu_n} \ln 2\mu_n$ for some $C\geq 1$.  That is, we have $\tau_*(v_n)\leq \frac C{\mu_n} \ln 2\mu_n$.  In particular, if $u_n$ are the 2D flows from Example \ref{E.7.7} below, we obtain $\nu_n\sim A_n^{113/128}$ and $\tau_*(v_n)\leq CA_n^{-1/64} \ln A_n$
\end{remark}

\begin{example} \label{E.7.7}
  When $d=2$, hypotheses of Theorem \ref{C.7.6} are satisfied  by $u_n(x,y) \defeq A_n\nabla^\perp \psi(x,y)$ with $\lim_{n\to\infty} A_n=\infty$ and  any stream function $\psi\in C^{2,\delta}(\T^d)$ that has only non-degenerate critical points and vanishes when $x=0$ or $y=0$, for instance $\psi(x,y) = \sin(2\pi x)\sin(2\pi y)$. (When periodically extended onto $\R^2$, these flows are called cellular, with their cells being $(k,k+1)\times(m,m+1)$ for any $k,m\in\mathbb Z$.)  This is because then $D(u_n)\sim A_n^{1/2}$ by~\cite{FannjiangPapanicolaou94,Koralov04}.
\end{example}
\begin{example} \label{E.7.8}
  When $d=3$, hypotheses of Theorem \ref{C.7.6} are satisfied by 
\[
u_n(x,y,z)  \defeq  A_n \big( \Phi_{x}(x,y) W'(z),
\Phi_{y}(x,y) W'(z), 8\pi^2\Phi(x,y)W(z) \big)
\]
with $\lim_{n\to\infty} A_n=\infty$, and the functions  $\Phi(x,y)  \defeq  \cos (2\pi x)\cos (2\pi y)$ and $W(z)  \defeq  \sin (2\pi z)$ (this again extends periodically to a cellular flow on $\R^3$).  We now have $\lim_{n\to\infty} D(u_n)=\infty$ by~\cite{RyzhikZlatos07}.
\end{example}

\bibliographystyle{halpha-abbrv}
\bibliography{refs,preprints}
\end{document}